\DeclareMathOperator{\id}{id}
\DeclareMathOperator{\Ker}{Ker \,}
\DeclareMathOperator{\Span}{Span}
\DeclareMathOperator{\Tr}{Tr \,}
\DeclareMathOperator{\SO}{SO}
\DeclareMathOperator{\SL}{SL}
\DeclareMathOperator{\GL}{GL}
\DeclareMathOperator{\diag}{diag}
\DeclareMathOperator{\Conv}{Conv}
\DeclareMathOperator{\Aut}{Aut}
\DeclareMathOperator{\ad}{ad}
\DeclareMathOperator{\ric}{Ric}
\DeclareMathOperator{\Der}{Der}
\DeclareMathOperator{\rk}{rk}
\DeclareMathOperator{\End}{End}
\newcommand \<{\langle}
\newcommand \ra{\rangle}
\newcommand \ip{\<\cdot,\cdot\ra}
\newcommand \al{\alpha}
\newcommand \la{\lambda}
\newcommand \bc{\mathbb{C}}
\newcommand \br{\mathbb{R}}
\newcommand \Rn{\mathbb R^n}
\newcommand \g{\mathfrak{g}}
\newcommand \slg{\mathfrak{sl}}
\newcommand \h{\mathfrak{h}}
\newcommand \n{\mathfrak{n}}
\newcommand \m{\mathfrak{m}}
\newcommand \z{\mathfrak{z}}
\newcommand \ag{\mathfrak{a}}
\newcommand \p{\mathfrak{p}}
\newcommand \f{\mathfrak{f}}
\newcommand \s{\mathfrak{s}}
\newcommand \dg{\mathfrak{d}}
\newcommand \tg{\mathfrak{t}}
\newtheorem{theorem}{Theorem}
\newtheorem{lemma}{Lemma}
\newtheorem{proposition}{Proposition}
\theoremstyle{definition}
\newtheorem{definition}{Definition}
\newtheorem{example}{Example} 
\newtheorem*{example*}{Example}
\theoremstyle{remark}
\newtheorem{remark}{Remark}
\begin{document}

\title{Einstein solvmanifolds and the pre-Einstein derivation}

\author{Y.Nikolayevsky}
\address{Department of Mathematics, La Trobe University, Victoria, 3086, Australia}
\email{y.nikolayevsky@latrobe.edu.au}

\keywords{Einstein solvmanifold, Einstein nilradical}
\subjclass[2000]{Primary 53C30,53C25}

\date{}

\begin{abstract}
An Einstein nilradical is a nilpotent Lie algebra, which can be
the nilradical of a metric Einstein solvable Lie algebra. The classification of
Riemannian Einstein solvmanifolds (possibly, of all noncompact homogeneous Einstein spaces) can
be reduced to determining, which nilpotent Lie algebras are Einstein nilradicals and to finding,
for every Einstein nilradical, its Einstein metric solvable extension.
For every nilpotent Lie algebra, we construct an (essentially unique) derivation,
the pre-Einstein derivation, the solvable extension by which may carry an Einstein inner product.
Using the pre-Einstein derivation, we then give a variational
characterization of Einstein nilradicals. As an application, we prove an easy-to-check convex
geometry condition for a nilpotent Lie algebra with a nice basis to be an Einstein nilradical
and also show that a typical two-step nilpotent Lie algebra is an Einstein nilradical.
\end{abstract}

\maketitle

\section{Introduction}
\label{s:intro}

The theory of Riemannian homogeneous spaces with an Einstein metric splits into three very different cases
depending on the sign of the Einstein constant, the scalar curvature. Among them, the picture is complete only in the
Ricci-flat case: by the result of \cite{AK}, every Ricci-flat homogeneous space is flat.

The major open conjecture in the case of negative scalar curvature is the \emph{Alekseevski Conjecture} \cite{Al1}
asserting that a noncompact Einstein homogeneous space admits a simply transitive solvable isometry group. This is
equivalent to saying that any such space is a \emph{solvmanifold}, a solvable Lie group with a left-invariant Riemannian
metric satisfying the Einstein condition.

By a deep result of J.Lauret \cite{La5}, any Einstein solvmanifold is \emph{standard}. This means that the metric solvable
Lie algebra $\s$ of such a solvmanifold has the following property: the orthogonal complement to the derived algebra
of $\s$ is abelian. The systematic study of standard Einstein solvmanifolds (and the term ``standard") originated from
the paper of J.Heber \cite{Heb}.

On the Lie algebra level, all the metric Einstein solvable Lie algebras can be obtained as the result of the following
construction \cite{Heb, La1, La5, LW}. One starts with the three pieces of data: a nilpotent Lie algebra $\n$, a
semisimple derivation $\Phi$ of $\n$, and an inner product $\ip_\n$ on $\n$, with respect to which $\Phi$ is
symmetric. An extension of $\n$ by $\Phi$ is a solvable Lie algebra $\s=\br H \oplus \n$ (as a linear space) with
$(\ad_H)_{|\n}:=\Phi$. The inner product on $\s$ is defined by $\<H, \n\ra = 0$, $\|H\|^2 = \Tr \Phi$ (and coincides with
the existing one on $\n$). The resulting metric solvable Lie algebra $(\s,\ip)$ is Einstein provided
$\n$ is ``nice" and the derivation $\Phi$ and the inner product $\ip_\n$ are
chosen ``in the correct way" (note, however, that these conditions are expressed by a system of algebraic equations,
which could hardly be analyzed directly, see Section~\ref{s:facts}). Metric Einstein solvable Lie algebras of higher rank
(with the codimension of the nilradical greater than one) having the same nilradical $\n$ can be obtained from $\s$ via
a known procedure, by further adjoining to $\n$ semisimple derivation commuting with $\Phi$.

It turns out that the structure of an Einstein metric solvable Lie algebra is completely encoded in its nilradical in
the following sense: given a nilpotent Lie algebra $\n$, there is no more than one (possibly none) choice of $\Phi$ and
of $\ip_\n$, up to conjugation by $\Aut(\n)$ and scaling, which may result in an Einstein metric solvable Lie
algebra $(\s,\ip)$.

\begin{definition} \label{d:en}
A nilpotent Lie algebra is called an \emph{Einstein nilradical}, if it is the nilradical of an Einstein metric solvable
Lie algebra. A derivation $\Phi$ of an Einstein nilradical $\n$ and an inner product $\ip_\n$, for which
the metric solvable Lie algebra $(\s,\ip)$ is Einstein are called an \emph{Einstein derivation} and a
\emph{nilsoliton} inner product respectively.
\end{definition}

In this paper, we address the following two questions:
\begin{enumerate} [(A)]
  \item \label{q:A}
  How to determine, whether a given nilpotent Lie algebra $\n$ is an Einstein nilradical?
  \item  \label{q:B}
  If $\n$ is an Einstein nilradical, how to construct an Einstein solvmanifold whose Lie algebra
  has $\n$ as its nilradical?
\end{enumerate}

To answer question~\eqref{q:B}, we have to produce an Einstein derivation and a nilsoliton inner product for~$\n$.
For an Einstein derivation, the answer is given by Theorem~\ref{t:preE} below: any Einstein derivation
is a positive multiple of a pre-Einstein derivation (which in practice can be found by solving a system of linear
equations). The nilsoliton inner product could rarely be found explicitly,
unless $\n$ has a very simple structure. Implicitly it is characterized by (iii) of Theorem~\ref{t:var} and
Remark~\ref{rem:critical} below (see also Theorem~\ref{t:varm} in Section~\ref{s:thpE}).

Question~\eqref{q:A} is much more delicate. A necessary condition for a nilpotent Lie algebra to be an Einstein
nilradical
is that it admits an $\mathbb{N}$-gradation (which is defined by the Einstein derivation \cite{Heb}). However,
not every  $\mathbb{N}$-graded nilpotent Lie algebra is an Einstein nilradical.

It is known, for instance, that the following nilpotent Lie algebras $\n$ are Einstein nilradicals:
$\n$ is abelian \cite{Al2}, $\n$ has a codimension one abelian ideal \cite{La2},
$\dim \n \le 6$ \cite{Wil, La2}, $\n$ is the algebra of strictly upper-triangular matrices \cite{Pay}.
Free Einstein nilradical are classified in \cite{Ni3}: apart from the abelian and the two-step ones,
there are only six others.
A characterization of Einstein nilradical with a simple Einstein derivation and
the classification of filiform Einstein nilradicals
(modulo known classification of filiform $\mathbb{N}$-graded Lie algebras) is given in \cite{Ni2}.

Our starting point is the following fact: if $\Phi$ is an Einstein derivation
of an Einstein nilradical $\n$, then for some constant $c < 0, \quad \Tr (\Phi \, \psi) = -c \, \Tr \psi$, for
any derivation $\psi$ of $\n$ (see Section~\ref{s:facts} for details). This motivates the following definition:

\begin{definition} \label{d:pE}
A derivation $\phi$ of a Lie algebra $\n$ is called \emph{pre-Einstein}, if it is semisimple, with all the eigenvalues
real, and
\begin{equation}\label{eq:pEtrace}
    \Tr (\phi \psi) = \Tr \psi,  \quad \text{for any $\psi \in \Der(\n)$}.
\end{equation}
\end{definition}

Here $\Der(\n)$ is the algebra of derivations of $\n$.
We call an endomorphism $A$ of a linear space real (nonnegative, positive), if all its eigenvalues are real
(respectively, nonnegative, positive). In the latter cases, we write $A \ge 0$ (respectively, $A>0$).
For any $\psi \in \Der(\n)$ we denote $\ad_\psi$ the corresponding inner derivation of $\Der(\n)$. If $\psi$ is
semisimple and real, the same is true for $\ad_\psi$.

Our main result is contained in Theorems~\ref{t:preE} and \ref{t:var}.

\begin{theorem} \label{t:preE}
{\ }
\begin{enumerate}[1.]
  \item
    \begin{enumerate}[(a)]
    \item Any Lie algebra $\g$ admits a pre-Einstein derivation $\phi_\g$.

    \item The derivation $\phi_\g$ is determined uniquely up to automorphism of $\g$.

    \item All the eigenvalues of $\phi_\g$ are rational numbers.
    \end{enumerate}

  \item Let $\n$ be a nilpotent Lie algebra, with $\phi$ a pre-Einstein derivation. If $\n$ is an Einstein
  nilradical, then its Einstein derivation is positively proportional to $\phi$ and
\begin{equation}\label{eq:pE>0}
  \phi > 0 \qquad \text{and} \qquad \ad_{\phi} \ge 0.
\end{equation}

\end{enumerate}
\end{theorem}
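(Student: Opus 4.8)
The plan is to prove part~1 by linear algebra inside the algebraic Lie algebra $\Der(\g)\subset\End(\g)$ with its trace form $\<A,B\ra:=\Tr(AB)$, and to deduce part~2 from the properties of Einstein derivations recalled in Section~\ref{s:facts}. For 1(a), recall that $\Der(\g)$ is algebraic (it is the Lie algebra of $\Aut(\g)$), so it admits a decomposition $\Der(\g)=\ag\ltimes\mathfrak u$ with $\mathfrak u$ the unipotent radical — every element of $\mathfrak u$ is a nilpotent endomorphism of $\g$ — and $\ag$ a maximal fully reducible subalgebra; then $\ag$ acts completely reducibly on $\g$, so the trace form is nondegenerate on $\ag$. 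Write $\ag=\z(\ag)\oplus[\ag,\ag]$, an orthogonal splitting for $\<\cdot,\cdot\ra$, and split the central torus $\z(\ag)=\tg^+\oplus\tg^-$ into its $\br$-split part $\tg^+$ (real eigenvalues) and its anisotropic part $\tg^-$ (purely imaginary eigenvalues), again orthogonal, with $\<\cdot,\cdot\ra$ positive definite on $\tg^+$. The linear functional $\psi\mapsto\Tr\psi$ vanishes on $\mathfrak u$, on $[\ag,\ag]$, and on $\tg^-$, hence is detected only on $\tg^+$; by nondegeneracy there is a unique $\phi_\g\in\tg^+$ with $\Tr(\phi_\g X)=\Tr X$ for all $X\in\tg^+$. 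This $\phi_\g$ is semisimple with real eigenvalues, and \eqref{eq:pEtrace} follows by decomposing an arbitrary $\psi\in\Der(\g)$ along $\tg^+\oplus\tg^-\oplus[\ag,\ag]\oplus\mathfrak u$: the first three contributions to $\Tr(\phi_\g\psi)$ are handled by the orthogonality relations, while $\Tr(\phi_\g N)=0$ for $N\in\mathfrak u$ is seen by decomposing $\mathfrak u$ into $\ad_{\phi_\g}$-eigenspaces — on a nonzero eigenspace $\phi_\g N$ is strictly block-off-diagonal for the eigenspace decomposition of $\g$, and on the zero eigenspace $N$ restricts to a nilpotent operator on each eigenspace of $\phi_\g$.

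For 1(b), if $\phi'$ is a second pre-Einstein derivation then $\br\phi'$ lies in a maximal fully reducible subalgebra, which by Mostow's conjugacy theorem is carried onto $\ag$ by some $\al\in\Aut(\g)$; since \eqref{eq:pEtrace} is invariant under $\Aut(\g)$, we may assume $\phi'\in\ag$. Testing \eqref{eq:pEtrace} against $[\ag,\ag]$ gives $\phi'\in\z(\ag)$, reality of the eigenvalues then gives $\phi'\in\tg^+$, and testing against $\tg^+$ with nondegeneracy gives $\phi'=\phi_\g$. For 1(c), fix a maximal $\br$-split torus $\h\supset\tg^+$ of $\Der(\g)$ and a basis of $\g$ consisting of weight vectors of $\h$, so that $\h$ becomes diagonal; the space $\mathcal D\subseteq\br^n$ of all diagonal derivations is cut out by the equations $d_i+d_j=d_k$ over the triples with $c_{ij}^k\neq0$, hence is $\br$-rational, and on $\mathcal D$ the trace form is $\sum_i d_id_i'$ and the functional is $\sum_i d_i$, both rational. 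Since $\phi_\g\in\h\subseteq\mathcal D$ is the unique vector of $\mathcal D$ representing this functional for this (positive definite, rational) form, its coordinates — that is, its eigenvalues — are rational.

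For part~2, by the facts recalled in Section~\ref{s:facts} an Einstein derivation $\Phi$ is symmetric for a nilsoliton inner product (hence semisimple with real eigenvalues) and satisfies $\Tr(\Phi\psi)=-c\,\Tr\psi$ for all $\psi\in\Der(\n)$ with some $c<0$; thus $-c^{-1}\Phi$ is pre-Einstein, so by 1(b) it is $\Aut(\n)$-conjugate to $\phi$, i.e.\ $\Phi$ is positively proportional to $\phi$ up to automorphism. As positivity of eigenvalues and of $\ad_{(\cdot)}$ is unchanged under conjugation and positive scaling, \eqref{eq:pE>0} reduces to $\Phi>0$ and $\ad_\Phi\ge0$. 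The former is Heber's theorem \cite{Heb} that the Einstein derivation defines an $\mathbb N$-gradation of $\n$. For the latter, write the nilsoliton equation $\ric_\n=c\,\id+\Phi$, so that an $\ad_\Phi$-eigenvector $\psi\in\Der(\n)$ with eigenvalue $\mu$ satisfies $[\ric_\n,\psi]=\mu\psi$; pairing with the transpose $\psi^t$ (for the nilsoliton product) and using cyclicity of the trace yields $\mu\Tr(\psi\psi^t)=\Tr\big(\ric_\n[\psi,\psi^t]\big)$ with $[\psi,\psi^t]$ symmetric and traceless. Then apply the moment-map identity $\Tr(\ric_\n S)=\tfrac14\<\pi(S)\beta,\beta\ra$ for symmetric $S$ (\cite{La1,La5}), where $\beta$ is the bracket of $\n$, $\pi$ the standard $\End(\n)$-action on $\Lambda^2\n^*\otimes\n$ (so that $\pi(A)\beta=0$ exactly for $A\in\Der(\n)$) and $\<\cdot,\cdot\ra$ the induced inner product for which $\pi(A)^*=\pi(A^t)$: since $\pi$ is a representation and $\pi(\psi)\beta=0$, one gets $\Tr(\ric_\n[\psi,\psi^t])=\tfrac14\<\pi(\psi)\pi(\psi^t)\beta,\beta\ra=\tfrac14\|\pi(\psi^t)\beta\|^2\ge0$, hence $\mu\ge0$ as $\|\psi\|^2>0$.

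The chief difficulties are, in part~1, the algebraic-group bookkeeping — above all, isolating the $\br$-split piece $\tg^+$ so that $\phi_\g$ has genuinely real eigenvalues, and invoking Mostow conjugacy correctly in the uniqueness statement — and, in part~2, the inequality $\ad_\phi\ge0$, which rests on the moment-map identity for $\Tr(\ric_\n S)$ together with the adjunction $\pi(\psi^t)=\pi(\psi)^*$; it is precisely this adjunction that converts the hypothesis ``$\psi$ is a derivation'' into the sign we need.
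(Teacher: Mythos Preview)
Your proof is correct and follows essentially the same route as the paper's: the Levi--Mal'cev/algebraic decomposition $\Der(\g)=\ag\ltimes\mathfrak u$, the split/anisotropic splitting of the central torus, Mostow conjugacy for uniqueness, a rationality argument for~1(c), and for part~2 the identification of $-c^{-1}\Phi$ as pre-Einstein together with Heber's positivity. The one notable difference is that for $\ad_\Phi\ge 0$ the paper simply cites the inequality $\Tr(\Phi[\psi,\psi^*])\ge 0$ from \cite{Ni1}, whereas you supply a self-contained moment-map proof of that inequality via $\pi(A)^*=\pi(A^t)$ and $\pi(\psi)\mu=0$; your argument is in fact a proof of the cited lemma, and your rationality argument in~1(c) is a streamlined version of the paper's explicit matrix computation.
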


The inequalities \eqref{eq:pE>0} are necessary, but not sufficient to guarantee that a nilpotent Lie algebra is an
Einstein nilradical. Combining the idea of the pre-Einstein derivation with \cite[Theorem 6.15]{Heb}
we give a variational characterization of Einstein nilradicals, which
answers question~\eqref{q:A}. Denote $\mathcal{V} = \wedge^2 (\Rn)^* \otimes \Rn$ the space of skew-symmetric bilinear
maps on~$\Rn$. Let $\mu$ be an element of $\mathcal{V}$ defining a nilpotent Lie algebra $\n= (\Rn, \mu)$. Choose and
fix a pre-Einstein derivation $\phi$ of $\n$, and define the subalgebra $\g_\phi \subset \slg(\n)$ by
\begin{equation}\label{eq:subalgebra}
    \g_\phi = \z(\phi) \cap \Ker(t),
\end{equation}
where $\z(\phi)$ is the centralizer of $\phi$ in $\slg(n)$ and $t$ is a linear functional on $\slg(n)$ defined by
$t(A)=\Tr(A \, \phi)$. Let $G_\phi \subset \SL(n)$ be the connected Lie group with the Lie algebra
$\g_\phi$ (explicitly given by \eqref{eq:Ggphi}). Define the action of $G_\phi$ on the linear space $\mathcal{V}$
by $g.\nu(X,Y)=g\nu(g^{-1}X,g^{-1}Y)$ for $\nu \in \mathcal{V}, \, g \in G_\phi$.

Choose an arbitrary inner product $\ip$ on $\Rn$, with respect to which $\phi$ is symmetric, and define
$\| \nu \|^2= \sum_{i,j} \|\nu(E_i,E_j)\|^2$, where $\{E_i\}$ is an orthonormal basis for $\Rn$ and
$\nu \in \mathcal{V}$.

\begin{theorem} \label{t:var}
For a nilpotent Lie algebra $\n=(\Rn, \mu)$ with a pre-Einstein derivation $\phi$, the following conditions are
equivalent:
\begin{enumerate}[\rm(i)]
  \item $\n$ is an Einstein nilradical;
  \item the orbit $G_\phi.\mu \subset \mathcal{V}$ is closed;
  \item the function $f:G_\phi \to \Rn$ defined by $f(g)=\|g.\mu\|^2$ has a critical point.
\end{enumerate}
\end{theorem}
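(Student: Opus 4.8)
\noindent\emph{Outline.} The plan is to derive the theorem from the Kempf--Ness theory applied to the linear action of the reductive group $G_\phi$ on $\mathcal V$, combined with the standard identification of the $\GL(n)$-moment map with the Ricci operator. First I would verify that $G_\phi$ is reductive, by checking that $\g_\phi=\z(\phi)\cap\Ker(t)$ is self-adjoint (closed under transposition with respect to the fixed $\ip$): $\z(\phi)$ is self-adjoint because $\phi$ is symmetric, and $\Ker(t)$ is self-adjoint because $t(A^{T})=\Tr(A^{T}\phi)=\Tr(\phi A)=t(A)$. Hence $G_\phi$ is a self-adjoint (so reductive) algebraic subgroup of $\SL(n)$, and for the $O(n)$-invariant norm $\|\cdot\|$ on $\mathcal V$ the Kempf--Ness/Richardson--Slodowy theory applies: for $\nu\in\mathcal V$ and $g\in G_\phi$, the point $g.\nu$ is a critical point of $h\mapsto\|h.\nu\|^{2}$ on $G_\phi$ iff $\langle\pi(A)(g.\nu),g.\nu\rangle=0$ for all $A\in\g_\phi$ (here $\pi$ is the differential of the action) iff $g.\nu$ has minimal norm in $G_\phi.\nu$; and $G_\phi.\nu$ is closed iff it contains such a point. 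This yields $\mathrm{(ii)}\Leftrightarrow\mathrm{(iii)}$, so only the link with $\mathrm{(i)}$ remains.

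\emph{Critical points versus nilsolitons.} The key step is: for $g\in G_\phi$, writing $\mu_g=g.\mu$, the element $g$ is a critical point of $f$ iff $\ric_{\mu_g}=c\,\id+\lambda\,\phi$ for some $c,\lambda\in\br$. Splitting $A\in\g_\phi$ into symmetric and skew parts (both in $\g_\phi$ by self-adjointness) and noting that a skew endomorphism generates a one-parameter subgroup of $O(n)$, which preserves $\|\cdot\|$, the critical-point condition reduces to $\langle\pi(A)\mu_g,\mu_g\rangle=0$ for all symmetric $A\in\g_\phi$. By the standard formula expressing $\langle\pi(A)\nu,\nu\rangle$, for symmetric $A$, as a positive multiple of $\Tr(\ric_\nu A)$ (Section~\ref{s:facts}), this says $\Tr(\ric_{\mu_g}A)=0$ for all symmetric $A\in\z(\phi)\cap\Ker(t)$. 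Since $g$ commutes with $\phi$ we have $\phi\in\Der(\mu_g)$; as $\phi$ is a \emph{symmetric} derivation, its eigenspace decomposition grades $\mu_g$ orthogonally, which (by the usual block computation) forces the symmetric operator $\ric_{\mu_g}$ to preserve each eigenspace, i.e. $\ric_{\mu_g}$ commutes with $\phi$. Writing $\phi_0$ for the traceless part of $\phi$ (nonzero since $\n$ is non-abelian), the space of $\ip$-symmetric endomorphisms commuting with $\phi$ decomposes, orthogonally for $(A,B)\mapsto\Tr(AB)$, as $\br\,\id\oplus\br\,\phi_0\oplus(\g_\phi\cap\mathrm{Sym})$, so the displayed condition is equivalent to $\ric_{\mu_g}\in\Span\{\id,\phi\}$, as claimed. (The abelian case is trivial: $\mu=0$, $\phi=\id$, $G_\phi=\SL(n)$, and (i)--(iii) all hold.)

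\emph{The two implications.} For $\mathrm{(iii)}\Rightarrow\mathrm{(i)}$: a critical point $g$ yields, by the above, a nilsoliton $(\Rn,\mu_g,\ip)$ with $\ric_{\mu_g}=c\,\id+\lambda\phi$ and $\lambda\phi\in\Der(\mu_g)$. Pairing with $\lambda\phi$, using $\pi(\lambda\phi)\mu_g=0$ (a derivation annihilates its bracket), $\Tr(\phi^{2})=\Tr\phi>0$ (set $\psi=\phi$ in \eqref{eq:pEtrace}), $\Tr\phi<n$ (the real eigenvalues of $\phi$ are not all $1$, as $\n$ is non-abelian) and $\Tr\ric_{\mu_g}<0$, one gets $\lambda>0$ and $c<0$; the standard construction (Section~\ref{s:facts}) then builds an Einstein solvable extension of $\n$. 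For $\mathrm{(i)}\Rightarrow\mathrm{(iii)}$: $\n$ carries a nilsoliton $\ip_{\n}$ with $\ric=c\,\id+D$, $D\in\Der(\n)$, and by Theorem~\ref{t:preE}(2) (after pulling $\ip_{\n}$ back by an automorphism of $\n$ if necessary) $D=\lambda\phi$ with $\lambda>0$. Writing $\ip_{\n}=\ip(P\,\cdot,\cdot)$ with $P>0$ symmetric, symmetry of $\phi$ for both inner products gives $P\phi=\phi P$, so $g:=P^{1/2}$ commutes with $\phi$ and transport of the structure constants yields $\ric_{g.\mu}=c\,\id+\lambda\phi$. As a positive symmetric element centralizing $\phi$, $g=\exp X$ with $X$ a symmetric endomorphism in $\br\,\id\oplus\br\,\phi_0\oplus\g_\phi$; these summands commute and $e^{t\phi}$ is an automorphism of $g_0.\mu$, so $g.\mu=\lambda_0\,(g_0.\mu)$ for some $g_0\in G_\phi$, $\lambda_0>0$, whence $\ric_{g_0.\mu}=\lambda_0^{-2}(c\,\id+\lambda\phi)$ is again of the form $c'\id+\lambda'\phi$; by the key step, $g_0$ is a critical point of $f$.

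I expect the main obstacle to be the key step — establishing that a symmetric derivation confines $\ric_{\mu_g}$ to the centralizer of $\phi$, and then the linear-algebra identification — together with the bookkeeping in $\mathrm{(i)}\Rightarrow\mathrm{(iii)}$ that moves the nilsoliton into the $G_\phi$-orbit of $\mu$ up to scaling: this is exactly where the $\Ker(t)$-condition in $\g_\phi$ (removing the $\phi_0$-direction) and the uniqueness of $\phi$ are used. The reductivity of $G_\phi$, the Kempf--Ness dictionary, and the Ricci/moment-map identity are routine inputs from the literature.
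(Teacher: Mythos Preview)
Your proposal is correct and follows essentially the same route as the paper: both use the Richardson--Slodowy (Kempf--Ness) theory for the self-adjoint reductive group $G_\phi$ acting on $\mathcal V$ to obtain $\mathrm{(ii)}\Leftrightarrow\mathrm{(iii)}$, both identify the derivative of $f$ along $\exp(tA)$ with $\Tr(\ric_{g.\mu}A)$ via \eqref{eq:riccinil}, both use that $\ric$ commutes with the symmetric derivation $\phi$ (the paper cites \cite[Lemma~2.2]{Heb}) to conclude that a critical point forces $\ric_{g.\mu}\in\Span(\id,\phi)$, and for $\mathrm{(i)}\Rightarrow\mathrm{(iii)}$ both transport the nilsoliton inner product into the $G_\phi$-orbit by the decomposition $H=H_0G_\phi$ with $H_0=\exp(\Span(\id,\phi))$. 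Your write-up is somewhat more explicit than the paper's (you spell out the self-adjointness of $\g_\phi$, the orthogonal splitting $\br\,\id\oplus\br\,\phi_0\oplus(\g_\phi\cap\mathrm{Sym})$, and the signs of $c,\lambda$, where the paper simply invokes \cite[Theorem~3.7]{La1}), but there is no substantive difference in strategy.
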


\begin{remark} \label{rem:critical}
Note that the equivalence of (i) and (ii) gives the characterization of Einstein nilradicals completely in terms of the
Lie algebra structure of $\n$, with no inner product involved.

The equivalence of (i) and (iii) is similar to the well-known criterion for
a homogeneous space of a \emph{unimodular} Lie group to be Einstein:
as it follows from \eqref{eq:riccinil} of Section~\ref{s:facts}, the scalar curvature of a metric nilpotent Lie algebra
is the squared norm of the Lie bracket times a constant. 

In (iii), one can replace ``has a critical point" by ``attains the minimum". Moreover, if $f$ attains the minimum
at $g \in G_\phi$ and $\mu_0=g.\mu$, then the metric Lie algebra $(\Rn, \mu_0, \ip)$ is isometrically
isomorphic to the nilradical of an Einstein metric solvable Lie algebra (see Theorem~\ref{t:varm} in
Section~\ref{s:thpE}).
\end{remark}

Note in conjunction with (ii), that the $\SL(n)$-orbit of a non-abelian nilpotent Lie algebra is never closed
\cite[Theorem 8.2]{La3}.
One can therefore either work with a smaller group (as in Theorem~\ref{t:var}), or consider a different function.
One such function naturally related to the group action is the squared norm of the moment map. As shown in \cite{La4,LW},
a nilpotent Lie algebra $\n=(\Rn, \mu)$ is an Einstein nilradical if and only if the squared
norm of the moment map (with respect to some inner product on $\Rn$) of the $\SL(n)$-action on $\mathcal{V}$
attains its minimum on the orbit of $\mu$.

As an application of Theorems~\ref{t:preE} and \ref{t:var}, we consider nilpotent Lie algebras having a nice basis.

\begin{definition}\label{d:nice}
Let $\{X_1, \ldots, X_n\}$ be a basis for a nilpotent Lie algebra $\n$, with
$[X_i, X_j]=\sum_k c_{ij}^k X_k$. The basis $\{X_i\}$ is called \emph{nice}, if
for every $i,j, \quad \#\{k: c_{ij}^k \ne 0\}$ $\le 1$, and for every $i,k, \quad \#\{j: c_{ij}^k \ne 0\} \le 1$.
\end{definition}

Although the condition of having a nice basis looks rather restrictive, the nilpotent Lie algebras with a nice basis
are not uncommon. Such algebras often appear in the classification lists, especially in the low-dimensional cases.
Every Lie algebra admitting a derivation with all the eigenvalues of multiplicity one has a nice basis (recall that to
be an Einstein nilradical, a nilpotent Lie algebra must admit a positive real semisimple derivation). In particular,
filiform algebras admitting an $\mathbb{N}$-gradation have a nice basis \cite{Ni2}.
Every two-step nilpotent algebra attached to a graph has a nice basis \cite{LW}.

Given a nilpotent algebra $\n$ of dimension $n$ with a nice basis, introduce the following objects.
In a Euclidean space $\Rn$ with the inner product $(\cdot, \cdot)$ and an orthonormal basis
$f_1, \ldots, f_n$, define the finite subset $\mathbf{F}=\{(Y)_{ij}^k= f_i+f_j-f_k: c_{ij}^k \ne 0, i < j\}$.
Denote $L$ the affine span of $\mathbf{F}$, the smallest affine subspace of $\Rn$ containing $\mathbf{F}$,
and $\Conv(\mathbf{F})$ the convex hull of $\mathbf{F}$.
Let $m = \#\mathbf{F}$. Fix an arbitrary ordering of the set $\mathbf{F}$ and define an $m \times n$ matrix $Y$
with the rows $(Y)_{ij}^k$. Namely, if for $1 \le a \le m$, the $a$-th element of $\mathbf{F}$ is $f_i+f_j-f_k$,
then the $a$-th row of $Y$ has $1$ in the columns $i$ and $j$, $-1$ in the column $k$, and zero elsewhere.
Denote $[1]_m$ an $m$-dimensional vector all of whose coordinates are ones.

We have the following theorem:

\begin{theorem}\label{t:nice}
A nonabelian nilpotent Lie algebra $\n$ with a nice basis is an Einstein nilradical if and only if any of the
following two equivalent conditions hold:

\begin{enumerate}[\rm(i)]
    \item the projection of the origin of $\Rn$ to $L$ lies in the interior of $\Conv(\mathbf{F})$.
    \item there exists a vector $\al \in \br^m$ with positive coordinates satisfying
    $YY^t \al = [1]_m$.
  \end{enumerate}
\end{theorem}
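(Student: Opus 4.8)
The plan is to deduce Theorem~\ref{t:nice} from the variational characterization in Theorem~\ref{t:var}, exploiting the fact that a nice basis diagonalizes everything in sight. First I would record what the pre-Einstein derivation $\phi$ looks like in a nice basis: since the structure constants satisfy the two combinatorial sparseness conditions, the derivation algebra $\Der(\n)$ contains the full diagonal subalgebra intersected with the constraints $c_{ij}^k\neq 0 \Rightarrow d_i+d_j=d_k$, and $\phi$ is itself diagonal, $\phi=\diag(\lambda_1,\dots,\lambda_n)$. Equation~\eqref{eq:pEtrace} then becomes a linear system for the $\lambda_i$; concretely, testing against the diagonal derivations shows that $\phi$ is the orthogonal projection (in the $(\cdot,\cdot)$ inner product on the space of ``weight vectors'') of a suitable vector onto $L$, so that the vector $(\lambda_i)$ is characterized by $\lambda\in L$ and $\lambda-\lambda_0\perp L$ for the appropriate base point. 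This identifies the eigenvalues of $\phi$ with the coordinates of the projection of the origin onto the affine span $L$ of $\mathbf{F}$, which is the geometric object in (i).

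Next I would run the $G_\phi$-action through the nice basis. Because $\phi$ is diagonal, its centralizer $\z(\phi)$ in $\slg(n)$ is block-diagonal; restricting further to $\Ker(t)$ and taking the subalgebra actually relevant to deforming $\mu$, one is reduced — after a standard argument that only the diagonal part of $G_\phi$ matters for closedness of the orbit of a vector $\mu$ all of whose nonzero components lie in distinct weight spaces (here the moment-map/real GIT machinery of Lauret, or a direct Kempf--Ness argument, applies) — to the action of a torus. Then the function $f(g)=\|g.\mu\|^2$ restricted to this torus $\exp(\ag)$ becomes, writing $g=\exp(\sum x_i E_{ii})$ and using that $c_{ij}^k\mu$-component transforms by $e^{x_i+x_j-x_k}$, a sum $\sum_{a=1}^m (c_a)^2 e^{2\langle (Y)_a, x\rangle}$ of exponentials whose exponents are exactly the rows of $Y$ (restricted to $L$ via the $\Ker(t)$ and $\z(\phi)$ conditions). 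Such a function of convex-exponential type attains its minimum if and only if $0$ lies in the interior of the convex hull of the exponent vectors relative to their affine span — this is the classical criterion for coercivity of a ``geometric programming'' objective — which is precisely condition (i). The equivalence of (i) and (ii) is then Gordan-type linear algebra: $0$ is in the relative interior of $\Conv(\mathbf{F})$ iff there is a strictly positive convex combination $Y^t\beta = 0$ with $\beta>0$, $\sum\beta_a=1$; I would convert this into the stated normal-equations form $YY^t\alpha=[1]_m$ with $\alpha>0$ by setting $\alpha$ proportional to the coefficients that realize the projection, using that the rows of $Y$ span $L$ and that the orthogonality defining $\phi$ forces the residual to be annihilated by $YY^t$.

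The main obstacle I anticipate is the reduction from the full group $G_\phi$ (which is genuinely nonabelian in general, containing off-diagonal root vectors $E_{ij}$ whenever $\lambda_i=\lambda_j$) to the torus: one must argue that the extra directions neither create nor destroy critical points / closedness of $G_\phi.\mu$. For this I would invoke the niceness of the basis to show that the nondiagonal part of $\g_\phi$ acts on $\mu$ in a controlled ``unipotent/triangular'' fashion, so that the closed-orbit question descends to a maximal torus — this is where one either cites the real reductive GIT results of Lauret--Will or gives the Kempf--Ness reduction explicitly. A secondary but routine point is checking that the restriction of the exponent data to the subspace cut out by $\z(\phi)\cap\Ker(t)$ still has affine span $L$ and still contains the origin's projection in its (relative) interior exactly under the stated hypotheses; this amounts to unwinding the definitions of $L$ and $t$ and observing both are governed by the same vectors $(Y)_{ij}^k$. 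Once these two points are in place, Theorem~\ref{t:var} delivers the equivalence with being an Einstein nilradical, and the convex-geometry reformulation (i)$\Leftrightarrow$(ii) completes the proof.
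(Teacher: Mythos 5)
Your proposal has the right skeleton and matches the paper's overall strategy (diagonal pre-Einstein derivation, restriction of $f$ to the diagonal torus as a sum of exponentials, the coercivity criterion, and a convex-geometry translation of (i) into (ii)). But the step you yourself flag as ``the main obstacle'' --- reducing the critical-point/closed-orbit question from $G_\phi$ to its diagonal torus --- is exactly where the real work lies, and the justification you sketch does not hold up. There is no general principle that closedness of a $G_\phi$-orbit descends to a fixed maximal torus merely because the nonzero components of $\mu$ lie in distinct weight spaces: for $\SL_2$ acting by conjugation on trace-free matrices, the orbit of $\bigl(\begin{smallmatrix}1&1\\0&-1\end{smallmatrix}\bigr)$ is closed while its orbit under the diagonal torus is not, even though the two components sit in distinct weight spaces. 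Nor is the off-diagonal part of $\g_\phi$ ``unipotent/triangular'': $\g_\phi$ is reductive and block-diagonal with respect to the eigenspaces of $\phi$, and contains genuinely symmetric off-diagonal elements. The paper closes this gap with two specific devices. For the easy direction, a critical point of $f$ restricted to the diagonal is automatically critical for all of $G_\phi$ because the Ricci operator of a nicely-based metric algebra is diagonal (equation \eqref{eq:ricdiag}), so the derivative of $f$ in every off-diagonal direction already vanishes. For the hard direction, it proves a refined polar decomposition $G_\phi=U_\phi P_0 G_D$ (Lemma~\ref{l:polar}, via the geometry of the symmetric space $G_\phi/U_\phi$ and the flat $\pi(\exp\g_D)$), and then observes that for $A\in\p_0$ the function $F(t)=f(\exp(tA))$ is convex with critical points at both $t=0$ (Ricci diagonal again) and $t=1$, hence constant, forcing $\exp A\in\Aut(\n)$ and pushing the critical point onto the diagonal. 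Neither device appears in your outline, and without them the reduction is unproved.

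Two secondary points. First, your translation of (i) is off: since $\mathbf{F}$ lies in the hyperplane $\sum_i x_i=1$, the origin is never in $L$, so the condition is that the (nonzero) projection $P=(\al,[1]_m)^{-1}Y^t\al$ lies in the interior of $\Conv(\mathbf{F})$; a relation $Y^t\beta=0$ with $\beta>0$ is impossible here, so the Gordan-type reformulation as you state it cannot be the right bridge to (ii). Second, you verify the pre-Einstein property of the diagonal $\phi$ only against diagonal derivations; the paper must (and does) show that the diagonal projection of an \emph{arbitrary} derivation is again a derivation, via the limiting argument with the rescaled inner products $\ip_r$, before \eqref{eq:pEtrace} can be checked in general. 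These are fixable, but together with the missing reduction they leave the proposal short of a proof.
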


Note that by \cite[Theorem 1]{Pay}, a metric nilpotent Lie algebra is nilsoliton
if and only if the equation $YY^t \al = -2c \,[1]_m$ holds with respect to the basis of Ricci eigenvectors
(where the components of $\al$ are the squares of the structural constants).

As another application of Theorems~\ref{t:preE} and \ref{t:var}, we prove Theorem~\ref{t:rvsc}, which says that a
nilpotent Lie algebra, which is \emph{complex} isomorphic to an Einstein nilradical, is an Einstein nilradical by
itself, and Theorem~\ref{t:prod}, which shows
that the direct sum of nilpotent Lie algebras is an Einstein nilradical if and only if all the summands are Einstein
nilradicals (see Section~\ref{s:appl}).

In Section~\ref{s:2step}, we consider two-step nilpotent Einstein nilradicals. Informally, we prove that

\begin{quote}
\emph{a typical two-step nilpotent Lie algebra is an Einstein nilradical of eigenvalue type $(1,2; q,p)$}.
\end{quote}

There seem to be no commonly accepted notion of what a ``typical" nilpotent Lie algebra is \cite{Luk}.
We mean the following. A two-step nilpotent Lie algebra $\n$ is said to be of type $(p, q)$, if $\dim \n = p+q$ and
$\dim [\n, \n] = p$ (clearly, $1 \le p \le \frac12 q(q-1)$). Any such algebra is determined by a point in the
linear space $\mathcal{V}(p,q)=(\bigwedge^2 \br^q)^p$, with two points giving isomorphic algebras if and only if they
lie on the same orbit of the action of $\GL(q) \times \GL(p)$ on $\mathcal{V}(p,q)$. The space of isomorphism classes
of the algebras of type $(p,q)$ is a compact non-Hausdorff space, the quotient of an open and dense subset of
$\mathcal{V}(p,q)$ by the action of $\GL(q) \times \GL(p)$.

\begin{theorem}\label{t:twostepopen} Suppose $q \ge 6$ and $2 < p < \frac12 q(q-1)-2$, or $(p,q)=(5,5)$. Then
\begin{enumerate}[\rm (i)]
    \item
    there is a continuum isomorphism classes of two-step nilpotent Lie algebras of type $(p,q)$; each of them
    has an empty interior in the space $\mathcal{V}(p,q)$ \emph{\cite{Eb1}};
    \item
    the space $\mathcal{V}(p,q)$ contains an open and dense subset corresponding to two-step nilpotent Einstein
    nilradicals of eigenvalue type $(1,2; q,p)$
    \footnote{After this paper was written, the author became aware that the
result equivalent to Theorem~\ref{t:twostepopen} is independently proved in \cite{Eb3}.}.
\end{enumerate}
\end{theorem}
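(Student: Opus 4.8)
The plan is to deduce (ii) from Theorems~\ref{t:preE} and~\ref{t:var}; statement (i) is the result of~\cite{Eb1} and is only quoted. Throughout, ``generic'' means ``belonging to some dense open subset of $\mathcal{V}(p,q)$'', and at the end one intersects the finitely many such subsets produced below.

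\emph{Step 1: the pre-Einstein derivation of a generic algebra.} Write $\br^n=V\oplus Z$ with $\dim V=q$ and $\dim Z=p$, so that a generic $\mu\in\mathcal{V}(p,q)$ defines a two-step nilpotent Lie algebra $\n=(\br^n,\mu)$ with $\mu\colon\wedge^2V\to Z$ onto and $[\n,\n]=Z$ central. A direct computation, using that $\mu$ is onto and $Z$ is central, shows that $\Der(\n)$ consists precisely of the block maps $\left(\begin{smallmatrix}A&0\\ C&\bar A\end{smallmatrix}\right)$ with $C\in\mathrm{Hom}(V,Z)$ arbitrary, $A$ in the subalgebra $\mathfrak{q}(\mu)=\{A\in\mathfrak{gl}(V):(A\otimes\id+\id\otimes A)(\Ker\mu)\subseteq\Ker\mu\}$, and $\bar A$ the endomorphism of $Z\cong\wedge^2V/\Ker\mu$ induced by $A\otimes\id+\id\otimes A$ acting on $\wedge^2V$. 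Now $\Ker\mu\subset\wedge^2\br^q$ has dimension $\binom q2-p$, and the hypotheses on $(p,q)$ are exactly those for which $\dim\mathrm{Gr}\!\bigl(\binom q2-p,\wedge^2\br^q\bigr)=p\bigl(\binom q2-p\bigr)$ is strictly larger than $\dim\mathrm{PGL}(q)=q^2-1$; combining this with a general-position argument, one concludes that for generic $\mu$ no nontrivial torus of $\GL(V)$ preserves $\Ker\mu$. Since $\mathfrak{q}(\mu)$ is an algebraic Lie algebra, it follows (via the Jordan decomposition, and the fact that semisimple and nilpotent operators act with zero trace on subquotients) that every $A\in\mathfrak{q}(\mu)\cap\slg(V)$ has $\Tr\bar A=0$, whereas $\id_V\in\mathfrak{q}(\mu)$ has $\Tr\id_V=q$ and $\overline{\id_V}=2\,\id_Z$, so $\Tr\overline{\id_V}=2p$. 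Hence both sides of~\eqref{eq:pEtrace} vanish on the subspace of $\Der(\n)$ spanned by $\mathrm{Hom}(V,Z)$ and $\mathfrak{q}(\mu)\cap\slg(V)$, so~\eqref{eq:pEtrace} holds for a scalar multiple $\phi=c\,(\id_V\oplus 2\,\id_Z)$ as soon as it holds at $\psi=\id_V\oplus 2\,\id_Z$, i.e.\ $c(q+4p)=q+2p$. By Theorem~\ref{t:preE} the pre-Einstein derivation of $\n$ is therefore $\phi=\tfrac{q+2p}{q+4p}(\id_V\oplus 2\,\id_Z)$, which is semisimple, positive, and of eigenvalue type $(1,2;q,p)$. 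Finally, the definition~\eqref{eq:subalgebra} gives $\g_\phi=\z(\phi)\cap\Ker(t)=\slg(V)\oplus\slg(Z)$, so $G_\phi=\SL(q)\times\SL(p)$, acting on $\mathcal{V}(p,q)\subseteq\mathcal{V}$ as the restriction of the usual $\GL(q)\times\GL(p)$-action.

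\emph{Step 2: the generic orbit is closed.} By Theorem~\ref{t:var}(ii) it remains to show that $G_\phi.\mu$ is closed for generic $\mu$. Assume it is not. By the Hilbert--Mumford criterion for closed orbits of real reductive groups, there is a one-parameter subgroup $\lambda$ of $G_\phi$ such that $\mu':=\lim_{t\to0}\lambda(t).\mu$ exists but $\mu'\notin G_\phi.\mu$; then $\mu$ is not fixed by $\lambda$ and every coordinate of $\mu$ of negative $\lambda$-weight vanishes, so $\mu$ lies in a proper coordinate subspace $Z_\lambda\subsetneq\mathcal{V}(p,q)$. Here the coordinates and weights are taken with respect to $\lambda$: if $\lambda$ has weights $a_1,\dots,a_q$ on $V$ and $b_1,\dots,b_p$ on $Z$, the coordinate indexed by $(i,j,k)$ (the coefficient of the $k$-th basis vector of $Z$ in the value of $\mu$ on the $i$-th and $j$-th basis vectors of $V$) has $\lambda$-weight $b_k-a_i-a_j$, and $\sum_ia_i=\sum_kb_k=0$ since $G_\phi$ is semisimple; hence the weights of all coordinates sum to zero, so a nontrivial $\lambda$ — which acts nontrivially on $\mathcal{V}(p,q)$ because $q\geq3$ — has some coordinate of strictly negative weight, and $Z_\lambda$ is indeed proper. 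As there are only finitely many distinct coordinate subspaces $Z_\lambda$, the set of $\mu$ with non-closed $G_\phi$-orbit is contained in a finite union of proper linear subspaces of $\mathcal{V}(p,q)$, whose complement is dense and open. Intersecting this complement with the dense open set of Step~1 and with $\{\mu:\dim[\n,\n]=p\}$ yields the required open dense subset of $\mathcal{V}(p,q)$: on it $\mu$ is an Einstein nilradical, and its Einstein derivation — positively proportional to $\phi$ by Theorem~\ref{t:preE} — has eigenvalue type $(1,2;q,p)$.

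\emph{The main obstacle} is the general-position statement in Step~1: a generic subspace of $\wedge^2\br^q$ of dimension $\binom q2-p$ should admit only scalars as $\GL(q)$-symmetries. Both the dimension count and the sharp constraints on $(p,q)$ live there, and this is essentially the two-step content of the genericity analysis of~\cite{Eb1} that also underlies part (i); granting it, Step~2 is a purely formal argument valid for every $q\geq3$.
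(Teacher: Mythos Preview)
Your Step~2 contains a genuine gap. The claim ``there are only finitely many distinct coordinate subspaces $Z_\lambda$'' is false: the subspace $Z_\lambda$ is defined relative to a basis diagonalizing $\lambda$, and this basis varies as $\lambda$ ranges over one-parameter subgroups not contained in a fixed maximal torus. If you conjugate $\lambda$ into a fixed torus $T$, you only learn that $g^{-1}.\mu\in Z_{\lambda_0}$ for some $g\in G_\phi$ and $\lambda_0\subset T$, i.e.\ that the \emph{orbit} of $\mu$ meets one of finitely many coordinate subspaces---but this can hold for every $\mu$. In general the locus of $\mu$ with non-closed orbit is an algebraic subvariety, not a finite union of linear subspaces (already for $\SL(2)$ on binary quadratic forms it is the quadric $\{\text{discriminant}=0\}$). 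Showing that this locus is proper is exactly the hard part. The paper handles it by invoking two substantial results from invariant theory: Elashvili's classification \cite{Ela}, which shows that the stabilizer in general position of $\SL(q,\bc)\times\SL(p,\bc)$ on $(\wedge^2\bc^q)^p$ is reductive except when $(p,q)=(2,2k+1)$, and Popov's stability criterion \cite{Pop}, which then guarantees that a Zariski-open set of orbits is closed. Passing to real points via \cite{BHC,Bir} completes the argument.

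You also misidentify the main obstacle. Your Step~1 is not needed: the paper never computes the pre-Einstein derivation of a generic two-step algebra directly. Instead, Lemma~\ref{l:tech}(3) shows that if the norm function on the $G_\Psi$-orbit has a critical point (with $\Psi$ the canonical derivation, so $G_\Psi=\SL(q)\times\SL(p)$), then $\n$ is an Einstein nilradical \emph{and} $\Psi$ is automatically proportional to a pre-Einstein derivation. Thus once the $\SL(q)\times\SL(p)$-orbit is closed, everything---including the eigenvalue type $(1,2;q,p)$---follows for free. The genericity statement you flag as the obstacle is bypassed; the real content lives entirely in the stability argument you attempted to shortcut in Step~2.
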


Two-step nilpotent Lie algebras of the types excluded by Theorem~\ref{t:twostepopen} can be completely
classified \cite{GT}.
Using Theorem~\ref{t:nice} we find all the two-step nilpotent Einstein nilradicals with $q \le 5$, $(p,q) \ne(5,5)$.

\medskip

The paper is organized as follows. Section~\ref{s:facts} gives the background on Einstein solvmanifolds.
In Section~\ref{s:thpE}, we prove Theorem~\ref{t:preE} and Theorem~\ref{t:varm}, which implies Theorem~\ref{t:var},
and also give a possible general strategy of determining whether a given nilpotent Lie algebra is an Einstein
nilradical. Section~\ref{s:appl} contains three applications of Theorem~\ref{t:preE} and Theorem~\ref{t:var}:
Theorem~\ref{t:nice} on Einstein nilradicals with a nice basis,
Theorem~\ref{t:rvsc}, which shows that the property of being an Einstein nilradical
is a property of the complexification of a real nilpotent Lie algebra, and Theorem~\ref{t:prod} on the
direct sum of Einstein nilradicals. In Section~\ref{s:2step}, we prove Theorem~\ref{t:twostepopen} and classify
low-dimensional two-step Einstein nilradicals (Proposition~\ref{p:lowdim}).

\section{Einstein solvmanifolds}
\label{s:facts}

Let $G$ be a Lie group with a left-invariant metric $Q$
obtained by the left translations from an inner product $\ip$ on the Lie algebra $\g$ of $G$.
Let $B$ be the Killing form of $\g$, and let $H \in \g$ be the \emph{mean curvature vector} defined by
$\<H, X\ra = \Tr \ad_X$.

The Ricci curvature $\mathrm{ric}$ of the metric Lie group $(G,Q)$ at the identity is given by
\begin{equation*}
    \mathrm{ric}(X)=-\<[H,X],X\ra-\frac12 B(X,X)-\frac12 \sum\nolimits_i \|[X,E_i]\|^2
    +\frac14 \sum\nolimits_{i,j} \<[E_i,E_j],X\ra^2,
\end{equation*}
for $X \in \g$, where $\{E_i\}$ is an orthonormal basis for $(\g, \ip)$ \cite[Eq.~(2.3)]{Heb}.

Equivalently, one can define
the Ricci operator $\ric$ of the metric Lie algebra $(\g, \ip)$ (the symmetric operator associated
to $\mathrm{ric}$) by
\begin{equation*}
\Tr \!\bigl((\ric + \frac12 (\ad_H + \ad_H^*) + \frac12 B )  A \bigr)
\!= \! \frac14 \sum_{i,j} \<A[E_i, E_j] - [AE_i, E_j] - [E_i, AE_j], [E_i, E_j]\ra,
\end{equation*}
for any $A \in \End(\g)$ (where $\ad_H^*$ is the metric adjoint of $\ad_H$).

If $(\n, \< \cdot, \cdot \ra)$ is a nilpotent metric Lie algebra, then $H = 0$ and $B = 0$, so
\begin{equation}\label{eq:riccinil}
\Tr (\ric_{\n} \, A) = \frac14 \sum\nolimits_{i,j} \<A[E_i, E_j] - [AE_i, E_j] - [E_i, AE_j], [E_i, E_j]\ra,
\end{equation}
for any $A \in \End(\n)$. Explicitly, for $X, Y \in \n$,
\begin{equation}\label{eq:riccinilexplicit}
\<\ric_{\n} X, Y \ra = \frac14 \sum_{i,j} \<X, [E_i, E_j]\ra \<Y, [E_i, E_j]\ra -
\frac12 \sum_{i,j} \<[X, E_i], E_j\ra \<[Y, E_i], E_j]\ra.
\end{equation}
By the result of \cite{La5}, any Einstein metric solvable Lie algebra is \emph{standard}, which means that
the orthogonal complement to the derived algebra $[\g, \g]$ is abelian.

It is proved in \cite{AK} that any Ricci-flat metric solvable Lie algebra is flat. By \cite{DM},
any Einstein metric solvable unimodular Lie algebra is also flat. We will therefore always assume $\g$
to be nonunimodular ($H \ne 0$), with an inner product of strictly negative scalar curvature $c \dim \g$.

Any Einstein metric solvable Lie algebra admits a rank-one reduction \cite[Theorem 4.18]{Heb}. This means that if
$(\g, \< \cdot, \cdot\ra)$ is such an algebra, with the nilradical $\n$ and the mean curvature vector $H$, then the
subalgebra $\g_1 = \mathbb{R}H \oplus \n$, with the induced inner product, is also Einstein. What is
more, the derivation $\Phi=\ad_{H|\n}:\n \to \n$ is symmetric with respect to the inner product, and all its
eigenvalues belong to $\gamma \mathbb{N}$ for some $\gamma > 0$. This implies, in particular, that the nilradical $\n$
of an Einstein metric solvable Lie algebra admits an $\mathbb{N}$-gradation defined by the eigenspaces of $\Phi$.
As proved in \cite[Theorem~3.7]{La1}, a necessary and sufficient condition for a metric nilpotent algebra
$(\n, \< \cdot, \cdot\ra)$ to be the nilradical of an Einstein metric solvable Lie algebra is
\begin{equation}\label{eq:ricn}
    \ric_\n = c \, \id_\n + \Phi, \quad \text{for some $\Phi \in \Der(\n)$},
\end{equation}
where $c \dim \g < 0$ is the scalar curvature of $(\g, \< \cdot, \cdot\ra)$. This equation, in fact, defines
$(\g, \< \cdot, \cdot\ra)$ in the following sense: given a metric nilpotent Lie algebra whose Ricci operator
satisfies \eqref{eq:ricn}, with some constant $c < 0$ and some $\Phi \in \Der(\n)$, one can define $\g$ as a
one-dimensional extension of $\n$ by $\Phi$. For such an extension $\g = \mathbb{R}H \oplus \n, \; \ad_{H|\n} = \Phi$,
and the inner product
defined by $\<H, \n \ra = 0,\; \|H\|^2 = \Tr \Phi$ (and coinciding with the existing one on $\n$) is Einstein, with
scalar curvature $c \dim \g$. A nilpotent Lie algebra $\n$ which admits an inner product
$\< \cdot, \cdot\ra$ and a derivation $\Phi$ satisfying \eqref{eq:ricn} is called an \emph{Einstein nilradical}, the
corresponding derivation $\Phi$ is called an \emph{Einstein derivation}, and the inner product $\< \cdot, \cdot\ra$
the \emph{nilsoliton metric}.

As proved in \cite[Theorem 3.5]{La1}, a nilpotent Lie algebra admits no more than one nilsoliton metric, up to
conjugation  by $\Aut(\n)$ and scaling (and hence, an Einstein derivation, if it exists, is unique, up to conjugation
and scaling). Equation \eqref{eq:ricn}, together with \eqref{eq:riccinil}, implies that if $\n$ is an Einstein
nilradical, with $\Phi$ the Einstein derivation, then for some $c <0$
\begin{equation}\label{eq:tracestandard}
    \Tr (\Phi \, \psi) = - c \, \Tr \psi,  \quad \text{for any $\psi \in \Der(\n)$}.
\end{equation}
The set of eigenvalues $\la_i$ and their multiplicities $d_i$ of the Einstein derivation $\Phi$ of an Einstein nilradical
$\n$ is called the \emph{eigenvalue type} of $\n$ (and of $\Phi$). The eigenvalue type is usually written as
$(\la_1, \ldots, \la_p \, ; \, d_1, \ldots, d_p)$ (note that the $\la_i$'s are defined up to positive multiple).

Throughout the paper, $\oplus$ means the direct sum of linear spaces (even when the summands are Lie algebras).
Any semisimple endomorphism $A$ of a linear space $V$ admits a decomposition into the real and the imaginary part:
$A=A^{\mathbb{R}} + A^{i\mathbb{R}}$. The operator
$A^{\mathbb{R}}$ is defined as follows: if $V_1, \ldots, V_m$ are the eigenspaces of $A$ acting on $V^\mathbb{C}$,
with eigenvalues $a_1, \ldots, a_m \in \bc$ respectively, then $A^{\mathbb{R}}$ acts by multiplication by
$b \in \mathbb{R}$ on every subspace $(\oplus_{k: \mathrm{Re} a_k = b} V_k) \cap V$. For any semisimple
$A \in \End(V)$, the operators $A, A^{\mathbb{R}}$, and $A^{i\mathbb{R}}$ commute. If $\psi$ is a semisimple derivation
of a Lie algebra $\g$, then both $\psi^{\mathbb{R}}$ and $\psi^{i\mathbb{R}}$ are also derivations.

\section{Pre-Einstein Derivation. Proof of Theorems~\ref{t:preE} and \ref{t:var}}
\label{s:thpE}

In this section, we prove Theorem~\ref{t:preE} and Theorem~\ref{t:varm}, which contains Theorem~\ref{t:var}, and
describe a possible general approach to answer questions \eqref{q:A} and \eqref{q:B} from Section~\ref{s:intro}.

\begin{proof}[Proof of Theorem~\ref{t:preE}]
1. (a) The algebra $\Der (\g)$ is algebraic. Let $\Der (\g) = \s \oplus \tg \oplus \n$ be its  Levi-Mal'cev
decomposition, where
$\tg \oplus \n$ is the radical of  $\Der (\g)$, $\s$ is semisimple, $\n$ is the set of all nilpotent elements in
$\tg \oplus \n$ (and is the nilradical of $\tg \oplus \n$), $\tg$ is a torus, an abelian subalgebra consisting of
semisimple elements, and $[\tg, \s] = 0$. With any $\psi \in \tg,\; \psi^{\mathbb{R}}$ and $\psi^{i\mathbb{R}}$ are also
in $\tg$. The subspaces
$\tg_c=\{\psi^{\mathbb{R}}\,:\, \psi \in \tg\}$ and $\tg_s=\{\psi^{i\mathbb{R}}\,:\, \psi \in \tg\}$ are the compact
and the fully $\mathbb{R}$-reducible tori (the elements of $\tg_s$ are diagonal matrices in some basis for
$\g$), $\tg_s \oplus \tg_c = \tg$.

The quadratic form $b$ defined on $\Der (\g)$ by $b(\psi_1, \psi_2) = \Tr (\psi_1  \psi_2)$ is invariant
($b(\psi_1, [\psi_2, \psi_3]) = b([\psi_1, \psi_3], \psi_2))$. In general, $b$ is degenerate, with
$\Ker b = \n$, 
so for any $\psi \in \n,\; b(\tg, \psi) = \Tr \psi = 0$. As $\s$ is semisimple and $[\tg, \s] = 0$, we also have
$b(\tg, \psi) = \Tr \psi = 0$, for any $\psi \in \s$. Moreover, for any
$\psi \in \tg_c, \quad b(\tg_s, \psi) = \Tr \psi = 0$.

Hence to find a pre-Einstein derivation for $\g$ it suffices to find an element $\phi \in \tg_s$ which satisfies
\eqref{eq:pEtrace}, for all $\psi \in \tg_s$. Such a $\phi$ indeed exists, as the restriction of $b$ to $\tg_s$ is
nondegenerate (even definite) and is unique, when a particular torus $\tg$ is chosen.

(b) The subalgebra $\s \oplus \tg$ is a maximal fully reducible subalgebra of $\Der(\g)$. As by \cite[Theorem 4.1]{Mos},
the maximal fully reducible subalgebras of $\Der(\g)$ are conjugate by an inner automorphism of $\Der(\g)$ (which
corresponds to an automorphism of $\g$), and then $\tg$, the center of $\s \oplus \tg$, is defined uniquely, the
uniqueness of $\phi$, up to automorphism, follows.

(c) The proof is similar to that of \cite[Theorem 4.14]{Heb}. Suppose $\phi$
has eigenvalues $\la_i$, with multiplicities $d_i$ respectively, $i=1, \ldots, p$. In a Euclidean space $\mathbb{R}^p$
with a fixed orthonormal basis $f_i$, consider all the vectors of the form $f_i+f_j-f_k$ such that
$\la_i+\la_j-\la_k = 0$.
In their linear span choose a basis $v_k, \; k = 1, \ldots , m$, consisting of vectors of the above form and
introduce a $p \times m$ matrix $F$ whose vector-columns are the $v_k$'s. Then any vector
$\nu = (\nu_1, \ldots, \nu_p)^t \in \mathbb{R}^p$ satisfying $F^t\nu =0$ defines a derivation $\psi = \psi(\nu)$ having
the same eigenspaces as $\phi$, but with the corresponding eigenvalues $\nu_i$. From \eqref{eq:pEtrace} we must have
$\sum d_i (\la_i -1) \nu_i = 0$, for any such $\nu$, which implies that the vector
$(d_1 (\la_1 -1), \ldots, d_p (\la_p -1))^t$ belongs to the column space of $F$. So there exists $x \in \mathbb{R}^m$
such that $\la = [1]_p + D^{-1}Fx$, where $\la = (\la_1, \ldots, \la_p)^t$, $[1]_p = (1, \ldots, 1)^t \in \mathbb{R}^p$,
and $D = \diag (d_1, \ldots, d_p)$. As $\phi$ by itself is a derivation, we have $F^t \la = 0$, which implies
$F^t[1]_p + F^tD^{-1}Fx = 0$, so that $x = - (F^tD^{-1}F)^{-1}[1]_m$, as $F^t[1]_p = [1]_m$ and $\rk \, F = m$. Then
$\la = [1]_p - D^{-1}F(F^tD^{-1}F)^{-1}[1]_m$ and the claim follows, as all the entries of $D$ and of $F$ are integers.

2. Suppose that $\n$ is an Einstein nilradical, with an Einstein derivation $\Phi$ and a nilsoliton inner product
$\ip$. Then $\Phi$ is semisimple, real and satisfies \eqref{eq:tracestandard}, with some negative
constant $c \in \br$, so $(-c)^{-1} \Phi$ is an Einstein derivation. Moreover, $\Phi > 0$ (as follows
from \cite[Theorem~4.14]{Heb}) and $\ad_{\Phi} \ge 0$. To prove the latter inequality, we use the fact that for
any $\psi \in \Der(\n), \quad \Tr (\Phi \, [\psi, \psi^*]) \ge 0$ (see assertion~2 of Lemma~2 of \cite{Ni1}).
If $\ad_{\Phi} \psi = \la \psi$ for some $\la \in \br$, then $\la \, \Tr(\psi \psi^*) \ge 0$.

\end{proof}

Note that the pre-Einstein derivation of a semisimple Lie algebra is trivial (zero), and it may well happen that
the pre-Einstein derivation of a nilpotent Lie algebra is zero (for instance, for a characteristically nilpotent
algebra).

Inequalities \eqref{eq:pE>0} from assertion 2 of Theorem~\ref{t:preE} can be used to prove that certain
nilpotent Lie algebras are not Einstein nilradicals. Of course, if $\n$ has no positive derivations at all,
it is not an Einstein nilradical. Example~1 of \cite{Ni1} shows that even when a nilpotent algebra has
positive derivations, its pre-Einstein derivation can be nonpositive. Nilpotent algebras whose
pre-Einstein derivation is positive, but the inequality $\ad_\phi \ge 0$ is violated, are more common.
One example is given below.

\begin{example}
Let $\n$ be a two-step nilpotent $12$-dimensional algebra of type $(2, 10)$ (see Section~\ref{s:2step})
defined by the relations
$[X_1,X_3]=[X_2,X_4]=[X_5,X_9]=[X_6,X_{10}]$ $=Z_1,\; [X_1,X_4]=[X_5,X_8]=[X_6,X_9]=[X_7,X_{10}]=Z_2$.
A direct computation (based, for instance, on \cite[Proposition 3.1]{Luk}) shows that the derivation
$\phi$ given by the diagonal matrix $\frac{1}{55}\diag(43,42,42,43,42,43,44,44,43,42,85,86)$ with respect to
the basis $\{X_i,Z_\al\}$ is pre-Einstein. An endomorphism $\psi$ sending $X_4$
to $X_2$ and all the other basis vectors to zero is a derivation satisfying $\ad_\phi\psi=-\frac{1}{55}\psi$.
\end{example}

In the proof of Theorem~\ref{t:var}, we combine the idea of the pre-Einstein derivation with \cite[Theorem~6.15]{Heb},
which in turn uses the results of \cite{RS} (see also \cite[Section 6]{La4}).
The group $G_\phi$ is explicitly constructed as follows.
Given a nilpotent algebra $\n$, fix a pre-Einstein derivation $\phi$. Let $\n_j$ be the eigenspaces of $\phi$,
with the corresponding eigenvalues $\la_j, \; j=1, \ldots, p$. Denote $a_j= N\la_j$, where
$N$ is the least common multiple of the denominators of the $\la_j$'s. Then $G_\phi$ is the identity component of
the subgroup $\tilde{G}_\phi \subset \prod_{j=1}^p \GL(\n_j) \subset \GL(n)$ defined by
\begin{equation}\label{eq:tildegphi}
\tilde{G}_\phi=\{(g_1, \ldots, g_p) \, : \, g_j \in \GL(\n_j), \,
\prod\nolimits_{j=1}^p \det g_j = \prod\nolimits_{j=1}^p (\det g_j)^{a_j} = 1\}.
\end{equation}
As all the $a_j$ are integers (although some could be zero or negative), the group $\tilde{G}_\phi$ is real algebraic.
The group $G_\phi$ is reductive, with the Lie algebra $\g_\phi$, and is given explicitly by
\begin{equation}\label{eq:Ggphi}
G_\phi=\tilde{G}_\phi \cap \prod\nolimits_{j=1}^p \GL^+(\n_j), \quad
\text{where }\GL^+(V)= \{g \in \GL(V) : \det g > 0 \}.
\end{equation}

We start with the following technical lemma (cf. \cite[Section~6.4]{Heb}).

\begin{lemma} \label{l:tech}
\begin{enumerate}[\rm 1.]
  \item
  Let $\n=(\Rn, \mu)$ be a nilpotent Lie algebra with a pre-Einstein derivation $\phi$. Then $\phi$ is a pre-Einstein
  derivation for every algebra $(\Rn, g.\mu), \; g \in G_\phi$. The group $G_\phi$ and the orbit $G_\phi.(g.\mu)$ are
  the same, for all $g \in G_\phi$.

  \item \label{it:bltech}
  Let $\n=(\Rn, \mu)$ be a nilpotent Lie algebra and let $\psi$ be a semisimple derivation of $\n$ with rational 
  eigenvalues (not necessarily a pre-Einstein). Let $\ip$ be an inner product on $\n$, with respect to which $\psi$ is
  symmetric. Define the algebra $\g_\psi$ and the group $G_\psi$ for $\psi$ by \eqref{eq:subalgebra}
  and \eqref{eq:Ggphi} respectively and the function $f: G_\psi \to \br$ by $f(g)=\|g.\mu\|^2$. Then:
  \begin{enumerate}[\rm(a)]
    \item For every $g \in G_\psi$ and $A \in \End(\n), \quad
    \Tr(\ric_{g.\mu} \, A)=\frac12 {\frac{d}{dt}}_{|t=0}f(\exp(tA)g)$.
    \item Let $Q$ be an inner product on $\End(\n)$ defined by $Q(A_1, A_2)=\Tr(A_1 \, A_2^*)$ (adjoint with
    respect to $\ip$), and $\mathfrak{Z}(\psi)$ be the centralizer of $\psi$ in $\End(\n)$. Then for
    every $\g \in G_\psi$ and $A \perp \mathfrak{Z}(\psi)$, ${\frac{d}{dt}}_{|t=0}f(\exp(tA)g)=0$.
  \end{enumerate}

  \item
  In the settings of \ref{it:bltech}, suppose that the function $f$ has a critical point $g_0 \in G_\psi$. Then:
\begin{enumerate}[\rm(a)]
  \item $\n$ is an Einstein nilradical, with $\ip$ a nilsoliton metric for $(\Rn,g_0.\mu)$.
  \item if $\n$ is nonabelian, then $\psi$ is proportional to a pre-Einstein derivation of $\n$.
\end{enumerate}
\end{enumerate}
\end{lemma}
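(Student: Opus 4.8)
The plan is to establish the three assertions in order, each building on the previous one.

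\emph{Assertion 1.} Every $g\in G_\phi$ commutes with $\phi$: since $\g_\phi\subseteq\z(\phi)$ and $G_\phi$ is connected, $G_\phi$ lies in the centraliser of $\phi$ in $\GL(n)$. As $g\colon(\Rn,\mu)\to(\Rn,g.\mu)$ is a Lie algebra isomorphism, $\Der(g.\mu)=g\,\Der(\mu)\,g^{-1}$; the derivation $\phi=g\phi g^{-1}$ lies in $\Der(g.\mu)$, is still semisimple and real, and for any $\psi'=g\psi g^{-1}\in\Der(g.\mu)$, conjugation invariance of the trace together with \eqref{eq:pEtrace} gives $\Tr(\phi\psi')=\Tr(g^{-1}\phi g\,\psi)=\Tr(\phi\psi)=\Tr\psi=\Tr\psi'$. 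Hence $\phi$ is a pre-Einstein derivation of $(\Rn,g.\mu)$; its eigenspaces are unchanged, so \eqref{eq:tildegphi}--\eqref{eq:Ggphi} define the same group, and $G_\phi.(g.\mu)=(G_\phi g).\mu=G_\phi.\mu$ because $G_\phi$ is a group containing $g$. (The same computation shows more generally that $g\phi g^{-1}$ is pre-Einstein for $g.\mu$ for \emph{any} $g\in\GL(n)$; I use this in assertion 3(b).)

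\emph{Assertion 2.} For (a), put $\mu'=g.\mu$ and differentiate $f(\exp(tA)g)=\sum_{i,j}\|(\exp(tA).\mu')(E_i,E_j)\|^2$ at $t=0$, using $\frac{d}{dt}\big|_{t=0}(\exp(tA).\mu')(X,Y)=A\mu'(X,Y)-\mu'(AX,Y)-\mu'(X,AY)$; a termwise comparison with \eqref{eq:riccinil} for the nilpotent algebra $(\Rn,\mu')$ yields the identity asserted in (a). For (b), the crucial point is $\ric_{g.\mu}\in\mathfrak{Z}(\psi)$: since $g\in G_\psi$ centralises $\psi$, the latter is an $\ip$-symmetric derivation of $(\Rn,g.\mu)$, so its eigenspaces are mutually orthogonal and the bracket of $g.\mu$ is graded by them; then \eqref{eq:riccinilexplicit} forces $\<\ric_{g.\mu}X,Y\ra=0$ whenever $X,Y$ lie in distinct eigenspaces, so $\ric_{g.\mu}$ preserves each eigenspace, i.e. commutes with $\psi$. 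Since $\psi=\psi^*$, both $\mathfrak{Z}(\psi)$ and its $Q$-orthogonal complement are $*$-invariant, so $A\perp\mathfrak{Z}(\psi)$ gives $A^*\perp\mathfrak{Z}(\psi)$, whence $\Tr(\ric_{g.\mu}A)=Q(\ric_{g.\mu},A^*)=0$ and, by (a), the derivative vanishes.

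\emph{Assertion 3.} Let $g_0$ be a critical point of $f$; we may assume $\n$ nonabelian (the abelian case being immediate, with $c=-1$, $\Phi=\id$). Applying criticality to the curves $t\mapsto\exp(tA)g_0$ with $A\in\g_\psi$ and invoking (a) gives $\Tr(\ric_{g_0.\mu}A)=0$ for all $A\in\g_\psi$ (recall $\g_\psi=\z(\psi)\cap\Ker t$ with $t(A)=\Tr(A\psi)$). Now $\psi\neq0$: otherwise $\g_\psi=\slg(n)$, and criticality would force $\Tr(\ric_{g_0.\mu}A)=0$ for all traceless $A$, i.e. $\ric_{g_0.\mu}$ a scalar operator, impossible for a nonabelian nilpotent metric algebra; being a nonzero derivation of a nonabelian algebra, $\psi$ is moreover not a scalar multiple of $\id$. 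Hence $\mathfrak{Z}(\psi)=\br\,\id\oplus\br\,\psi\oplus\g_\psi$: indeed $\mathfrak{Z}(\psi)=\br\,\id\oplus\z(\psi)$, while the functional $A\mapsto\Tr(A\psi)$ restricted to $\z(\psi)$ has kernel $\g_\psi$ and does not vanish identically, since $\Tr((\psi-\tfrac{\Tr\psi}{n}\id)\psi)=\Tr(\psi^2)-\tfrac{(\Tr\psi)^2}{n}>0$ by strict Cauchy--Schwarz. By (b) we may write $\ric_{g_0.\mu}=a\,\id+b\,\psi+A_0$ with $A_0\in\g_\psi$; testing against $A\in\g_\psi$, where $\Tr(\id\,A)=0$ and $\Tr(\psi A)=t(A)=0$, gives $\Tr(A_0A)=0$ for all $A\in\g_\psi$, hence $A_0=0$ since the trace form is nondegenerate on the self-adjoint subspace $\g_\psi\subseteq\slg(n)$. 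So $\ric_{g_0.\mu}=a\,\id+b\,\psi$ with $b\,\psi\in\Der(g_0.\mu)$. Finally $\Tr(\ric_{g_0.\mu}D)=0$ for every $D\in\Der(g_0.\mu)$ (put $A=D$ in \eqref{eq:riccinil}), so $D=\psi$ gives $b=-a\,\Tr\psi/\Tr(\psi^2)$, whence $\Tr\ric_{g_0.\mu}=a\bigl(n-(\Tr\psi)^2/\Tr(\psi^2)\bigr)$; as the scalar curvature of a nonabelian nilpotent metric algebra is negative and $n-(\Tr\psi)^2/\Tr(\psi^2)>0$, we get $a<0$. Thus $(\Rn,g_0.\mu,\ip)$ satisfies \eqref{eq:ricn} with $c=a<0$, hence is the nilradical of an Einstein metric solvable Lie algebra, and since $g_0$ is an isomorphism $\n\to(\Rn,g_0.\mu)$ this proves 3(a).

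\emph{Assertion 3(b) and the main obstacle.} For nonabelian $\n$, $b\neq0$ (else $\ric_{g_0.\mu}$ would be scalar), so $\psi$ is proportional to the Einstein derivation $b\,\psi$ of $(\Rn,g_0.\mu)$, which by assertion~2 of Theorem~\ref{t:preE} is proportional to a pre-Einstein derivation $\phi'$ of $(\Rn,g_0.\mu)$. By the general form of assertion 1, $\phi'=g_0\phi_0g_0^{-1}$ for a pre-Einstein derivation $\phi_0$ of $\n$; but $g_0$ centralises $\psi$, hence $\phi'$, hence $\phi_0$, so $\phi'=\phi_0$ and $\psi$ is proportional to the pre-Einstein derivation $\phi_0$ of $\n$. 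I expect the main obstacle to be assertion 3(a): the splitting $\mathfrak{Z}(\psi)=\br\,\id\oplus\br\,\psi\oplus\g_\psi$ together with nondegeneracy of the trace form on $\g_\psi$ must be set up with care, and the genuinely delicate step is fixing the sign $c=a<0$, which forces one to combine the vanishing of $\Tr(\ric\,D)$ on derivations with negativity of the scalar curvature and a strict Cauchy--Schwarz estimate. Assertions 1 and 2(a) are bookkeeping, while 2(b) rests only on the observation that a symmetric semisimple derivation block-diagonalises the Ricci operator along its eigenspaces.
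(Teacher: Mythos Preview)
Your proof is correct and follows essentially the same route as the paper: assertion~1 via conjugation of derivation algebras, 2(a) by direct differentiation against \eqref{eq:riccinil}, 2(b) from the fact that a symmetric derivation commutes with the Ricci operator, and assertion~3 by combining criticality on $\g_\psi$ with 2(b) to pin $\ric_{g_0.\mu}$ down to $\Span(\id,\psi)$.

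The only genuine difference is in 3(a): the paper, once it has $\ric_{g_0.\mu}\in\Span(\id,\psi)$, simply invokes \cite[Theorem~3.7]{La1} to conclude that $\ip$ is nilsoliton, without discussing the sign of the constant. You instead give a self-contained argument: using $\Tr(\ric_{g_0.\mu}\psi)=0$ to solve for $b$ in terms of $a$, then combining the strict Cauchy--Schwarz inequality $n\Tr\psi^2>(\Tr\psi)^2$ with negativity of the scalar curvature (which is just $-\tfrac14\|g_0.\mu\|^2$ from \eqref{eq:riccinil} with $A=\id$) to force $a<0$. This is a nice bonus, as it makes the lemma independent of the cited result. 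Your explicit handling of the degenerate cases $\psi=0$ and $\psi$ scalar, and of the decomposition $\mathfrak{Z}(\psi)=\br\,\id\oplus\br\,\psi\oplus\g_\psi$, is likewise more detailed than the paper's one-line ``It follows from \eqref{eq:subalgebra} that $\ric_{g_0.\mu}\in\Span(\id,\psi)$''. One small remark: the phrase ``the trace form is nondegenerate on the self-adjoint subspace $\g_\psi$'' is slightly ambiguous; what you actually use (and what is true) is that $A_0$, being a difference of symmetric operators, is itself symmetric, so $\Tr(A_0^2)=\|A_0\|_Q^2=0$ forces $A_0=0$.
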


\begin{proof}
1. It is easy to see that $\Der(g.\mu)=g\Der(\mu)g^{-1}$, for any $g \in \GL(n)$. In particular, if $g \in G_\phi$ (and
hence commutes with $\phi$), the endomorphism $\phi$ is a real semisimple derivation of $(\Rn,g.\mu)$ satisfying
\eqref{eq:pEtrace} with any $\psi \in \Der(g.\mu)$.

2. Assertion (a) follows from \eqref{eq:riccinil} (with the Lie bracket $[\cdot,\cdot]=g.\mu$). Assertion (b) follows
from (a) and the fact that the Ricci tensor commutes with every symmetric derivation \cite[Lemma~2.2]{Heb}, in
particular, with $\psi$.

3. From the assumption and assertion \ref{it:bltech} we obtain that $\Tr(\ric_{g_0.\mu} \, A) = 0$, for all
$A \in \g_\psi \oplus (\mathfrak{Z}(\psi))^\perp$. It follows from \eqref{eq:subalgebra} that
$\ric_{g_0.\mu} \in \Span(\id, \psi)$, that is, equation \eqref{eq:ricn} holds, with some $c \in \br$ and some
$\psi \in \Der(g_0.\mu)$. Then $\ip$ is a nilsoliton metric for $(\Rn, g_0.\mu)$ by \cite[Theorem~3.7]{La1}.
The nilsoliton inner product for $\n=(\Rn,\mu)$ can be then taken as $\<X,Y\ra'= \<g_0^{-1}X,g_0^{-1}Y\ra$, for
$X,Y \in \Rn$. This proves (a). To prove (b), note that $\ric_{g_0.\mu} = c\,\id + a\psi$ for some $c, a \in \br$,
as shown above, and $\ric_{g_0.\mu} = c_1 \id + \Phi$, for some $c_1 \in \br$, where $\Phi$ is the Einstein
derivation of the metric Lie algebra $(\Rn,g_0.\mu,\ip)$. By assertion 2 of Theorem~\ref{t:preE}, $\Phi = a_1 \phi$
for some $a_1 \in \br$, where $\phi$ is a pre-Einstein derivation for $(\Rn,g_0.\mu)$, and hence for $\n$ by
assertion 1. Then $c\,\id + a\psi=c_1\id + a_1\phi$. If $c \ne c_1$, then $\id \in \Der(g_0.\mu)$,
so $\n$ is abelian. Otherwise, $a\psi=a_1\phi$. If $a=0$, then $\ric_{g_0.\mu} = c\,\id$, and $\n$ is again
abelian by \cite[Theorem~2.4]{Mil}.
\end{proof}

Using the results of \cite{RS} and Lemma~\ref{l:tech} we prove the following theorem, which contains
Theorem~\ref{t:var}.

\begin{theorem} \label{t:varm}
Let $\n=(\Rn, \mu)$ be a nilpotent Lie algebra with a pre-Einstein derivation $\phi$. Let $\ip$ be an inner product
on $\n$, with respect to which $\phi$ is symmetric. Define the algebra $\g_\phi$ and the group $G_\phi$
by \eqref{eq:subalgebra} and \eqref{eq:Ggphi} respectively and the function $f: G_\phi \to \br$ by $f(g)=\|g.\mu\|^2$.
\begin{enumerate}[\rm 1.]
  \item
  The following conditions are equivalent:
  \begin{enumerate}[\rm(i)]
    \item $\n$ is an Einstein nilradical;
    \item the orbit $G_\phi.\mu \subset \mathcal{V}$ is closed;
    \item the function $f:G_\phi \to \Rn$ has a critical point;
    \item the function $f:G_\phi \to \Rn$ attains its minimum.
  \end{enumerate}
  If $g_0 \in G_\phi$ is a critical point for $f$, then $\<g_0^{-1}\cdot,g_0^{-1}\cdot\ra$ is a nilsoliton inner product
  for $\n$.

  \item
  Suppose the orbit $G_\phi.\mu$ is not closed. Then there exists a unique $\mu_0 \in \overline{G_\phi.\mu}$ such that
  the orbit $G_\phi.\mu_0$ is closed. The following holds:
\begin{enumerate}[\rm(a)]
  \item
  the algebra $\n_0=(\Rn,\mu_0)$ is an Einstein nilradical not isomorphic to $\n$;
  \item
  either $\phi$ is a pre-Einstein derivation for $\n_0$, or $\n_0$ is abelian;
  \item
  there exists $A \in \g_\phi$ such that $\mu_0=\lim_{t\to\infty} \exp(tA).\mu$;
  \item
  such an $A$ can be chosen symmetric, with integer eigenvalues.
\end{enumerate}

\end{enumerate}
\end{theorem}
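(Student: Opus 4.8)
The plan is to deduce Theorem~\ref{t:varm} from the real Kempf--Ness theory for linear actions of reductive groups, in the form of \cite{RS} (as already used in \cite[Theorem~6.15]{Heb}), with Lemma~\ref{l:tech} as the bridge to the Einstein condition. First I would check that the action of $G_\phi$ on $\mathcal V$ fits that framework. Since $\phi$ is symmetric with respect to $\ip$, the subalgebra $\g_\phi=\z(\phi)\cap\Ker(t)$ is invariant under the metric transpose $A\mapsto A^*$: the relation $A\phi=\phi A$ gives $A^*\phi=\phi A^*$, and $t(A^*)=\Tr(A^*\phi)=\Tr((\phi A)^*)=\Tr(A\phi)=t(A)$, this trace being real. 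Hence $\g_\phi=\mathfrak{k}_\phi\oplus\mathfrak{p}_\phi$ is a Cartan decomposition into skew-symmetric and symmetric parts, $K_\phi=G_\phi\cap O(n)$ is maximal compact in the connected reductive group $G_\phi$ of \eqref{eq:Ggphi}, the norm $\|\cdot\|$ on $\mathcal V$ is $K_\phi$-invariant, and $\mathfrak{p}_\phi$ acts by symmetric operators. In this situation \cite{RS} provides: the orbit $G_\phi.\nu$ is closed iff $g\mapsto\|g.\nu\|^2$ has a critical point iff it attains its minimum; when the orbit is closed, the critical points inside it form a single $K_\phi$-orbit; and $\overline{G_\phi.\nu}$ contains a unique closed orbit, which — when $G_\phi.\nu$ is not closed — is reached along a one-parameter subgroup $t\mapsto\exp(tA)$ with $A\in\mathfrak{p}_\phi$, and $A$ may be taken with rational, hence (reparametrising $t$) integer, eigenvalues.

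Granting this, the equivalences (ii)$\Leftrightarrow$(iii)$\Leftrightarrow$(iv) of part~1 are exactly the first clause of \cite{RS}. For (iii)$\Rightarrow$(i) with the nilsoliton assertion I would invoke Lemma~\ref{l:tech}.3: a critical point $g_0\in G_\phi$ of $f$ satisfies, via Lemma~\ref{l:tech}.2 and \eqref{eq:subalgebra}, $\ric_{g_0.\mu}\in\Span(\id,\phi)$, so $\ip$ is a nilsoliton metric for $(\Rn,g_0.\mu)$ and $\<g_0^{-1}\cdot,g_0^{-1}\cdot\ra$ is one for $\mu$. The remaining implication (i)$\Rightarrow$(ii) is where the pre-Einstein derivation enters. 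If $\n$ is an Einstein nilradical with nilsoliton metric $\ip_0$, then by assertion~2 of Theorem~\ref{t:preE} its Einstein derivation equals $\lambda\phi$ with $\lambda>0$, and $\lambda\phi$ is symmetric with respect to $\ip_0$; thus $\phi$ is symmetric for $\ip_0$ and $\ric_{\mu,\ip_0}=c\,\id+\lambda\phi$ by \eqref{eq:ricn}. Applying Lemma~\ref{l:tech}.2(a) with $\ip_0$ in place of $\ip$ (the group $G_\phi$ is unchanged, depending only on the eigenspaces of $\phi$), for every $A\in\g_\phi$ one gets $\tfrac12\frac{d}{dt}|_{t=0}\|\exp(tA).\mu\|_0^2=\Tr((c\,\id+\lambda\phi)A)=c\,\Tr A+\lambda\,t(A)=0$, since $A\in\slg(n)\cap\Ker t$. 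So $\id\in G_\phi$ is a critical point of $g\mapsto\|g.\mu\|_0^2$, whence by \cite{RS} the orbit $G_\phi.\mu$ is closed; and closedness of $G_\phi.\mu\subset\mathcal V$ is independent of the inner product. This proves part~1.

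For part~2, let $\mu_0$ be the unique element of $\overline{G_\phi.\mu}$ with closed $G_\phi$-orbit, written $\mu_0=\lim_{t\to\infty}\exp(tA).\mu$ with $A\in\mathfrak{p}_\phi\subset\g_\phi$ symmetric with integer eigenvalues; this is (c) and (d). As $\exp(tA)$ commutes with $\phi$, the latter is a derivation of every $\exp(tA).\mu$, and, by closedness of the derivation condition, of the limit $\mu_0$; it is semisimple with rational eigenvalues, so Lemma~\ref{l:tech}.3 applies to $(\mu_0,\phi,\ip)$. Since $G_\phi.\mu_0$ is closed, $g\mapsto\|g.\mu_0\|^2$ has a critical point, so by Lemma~\ref{l:tech}.3(a) $\n_0=(\Rn,\mu_0)$ is an Einstein nilradical, and by Lemma~\ref{l:tech}.3(b) either $\n_0$ is abelian or $\phi$ is proportional to a pre-Einstein derivation $\phi_0$ of $\n_0$; in the latter case, comparing $\Tr(\phi^2)=\Tr\phi$ (as $\phi$ is pre-Einstein for $\n$) with $\Tr(\phi_0^2)=\Tr\phi_0$ forces the proportionality constant to be $1$, so $\phi$ itself is pre-Einstein for $\n_0$ — this is (b). Finally $\n_0\not\cong\n$: if they were isomorphic then $\n$ would be an Einstein nilradical, hence $G_\phi.\mu$ closed by part~1, contradicting $\mu_0\notin G_\phi.\mu$; this gives (a).

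The step I expect to be the main obstacle is the honest invocation of \cite{RS}: over $\br$ the Hilbert--Mumford picture is more delicate than over $\bc$, and one must extract from it precisely (1) the closed-orbit $\Leftrightarrow$ minimal-vector $\Leftrightarrow$ minimum criterion, (2) uniqueness of the closed orbit in the closure, and (3) a rational, after rescaling integral, degenerating one-parameter subgroup inside $\mathfrak{p}_\phi$. Once these are in place the rest is bookkeeping with Lemma~\ref{l:tech} and assertion~2 of Theorem~\ref{t:preE}; the only other delicate point is the normalisation of the limiting derivation handled above.
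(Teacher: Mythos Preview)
Your argument is essentially the paper's: real Kempf--Ness theory from \cite{RS} supplies (ii)$\Leftrightarrow$(iii)$\Leftrightarrow$(iv) and the structure of part~2, while Lemma~\ref{l:tech} and assertion~2 of Theorem~\ref{t:preE} translate critical points into nilsoliton metrics. Two points deserve comment.

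For (i)$\Rightarrow$(ii) you take a slightly different route than the paper. You pass to a nilsoliton inner product $\ip_0$ for which $\phi$ is symmetric (one must note, as the paper does, that this requires first conjugating by an element of $\Aut(\n)$, since Theorem~\ref{t:preE}.2 only gives $\Phi$ proportional to \emph{some} pre-Einstein derivation), observe that $\id$ is critical for $g\mapsto\|g.\mu\|_0^2$, and conclude that the orbit is closed, a metric-independent statement. The paper instead stays with the fixed $\ip$ and explicitly produces the critical point of $f$ itself: writing $\ip'=\<h\cdot,\cdot\ra$ with $h$ positive and commuting with $\phi$, it factors $h_1=gg_0$ with $g\in G_\phi$ and $g_0\in\exp(\Span(\id,\phi))$, and shows $g^{-1}$ is critical for $f$. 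Your shortcut is valid and arguably cleaner; the paper's version is more constructive, as it locates the critical point for the original $f$ and thereby identifies the nilsoliton inner product in the statement directly.

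There is a genuine gap in your treatment of 2(b). From $\phi=a\phi_0$ together with $\Tr\phi^2=\Tr\phi$ and $\Tr\phi_0^2=\Tr\phi_0$ you get $a(a-1)\Tr\phi_0=0$, which does \emph{not} force $a=1$: one must also dispose of $\Tr\phi_0=0$ and $a=0$. The paper handles these: $\Tr\phi_0=0$ is impossible because $\n_0$ is an Einstein nilradical, so $\phi_0>0$ by Theorem~\ref{t:preE}.2; and $a=0$ means $\phi=0$, whence $G_\phi=\SL(n)$, and then the unique closed orbit in $\overline{\SL(n).\mu}$ is the origin by \cite[Theorem~8.2]{La3}, so $\n_0$ is abelian. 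With these two cases added, your argument for (b) is complete.
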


\begin{proof}
1. The group $G_\phi$ is a real reductive algebraic group with the Lie algebra $\g_\phi$, which
has a Cartan decomposition compatible with $\ip$ (on the symmetric and the skew-symmetric endomorphisms).
The equivalence of (ii) and (iv) follows directly from \cite[Theorem~4.4]{RS}. The equivalence of (ii) and (iii)
follows from \cite[Theorem~4.3]{RS} and the fact that the group $G_\phi$ is the same for all the points of the orbit
$G_\phi.\mu$ (assertion 1 of Lemma~\ref{l:tech}). The implication (iii)$\Rightarrow$(i) and the fact that the nilsoliton
inner product has the required form follow from assertion 3 of Lemma~\ref{l:tech}.

To prove the converse implication (i)$\Rightarrow$(iii),
suppose that $\n$ is an Einstein nilradical. Then for some $g \in \GL(n)$, the inner
product $\ip$ is nilsoliton on $(\Rn, g.\mu)$. We
want to show that $g$ can be chosen from the group $G_\phi$.
By \eqref{eq:ricn} and assertion 2 of Theorem~\ref{t:preE}, for any nilsoliton inner product there exists
a symmetric pre-Einstein derivation, so by conjugation  by an element of $\Aut(\n)$ we can assume that there exists a
nilsoliton inner product  $\ip'$, with respect to which $\phi$ is symmetric.
It follows that  for all $X, Y \in \Rn, \quad \<X, Y\ra' = \<hX, Y\ra$ for some positive definite $h$ symmetric with
respect to $\ip$ and belonging to $H$, the centralizer of $\exp\phi$ in $\GL^+(n)$.
Then there exists $h_1 \in H$, with all the eigenvalues positive, such that $h_1^*h_1 =h$ ($h_1^*$ is the adjoint with
respect to $\ip$). Hence $\ip$ is a nilsoliton inner product on the Lie algebra
$(\Rn, h_1^{-1}.\mu)$. The two-dimensional abelian Lie group $H_0=\exp(\Span(\id, \phi))$ lies in the center of
$H$, and $H=H_0 G_\phi$, so $h_1=gg_0$ for some $g \in G_\phi$ and $g_0 \in H_0$ (in fact, $g$ even belongs to
$\exp(\g_\phi)$). Moreover, $g_0= e^t \exp(s \phi)$ for some $t,s \in \br$. As $\exp(s \phi)$ is an automorphism of
$\mu$ and $e^t$ is a scaling, the inner product $\ip$ is nilsoliton on $(\Rn, g^{-1}.\mu)$,
with $g \in G_\phi$, which implies that $g^{-1}$ is a critical point of the function $f$.

2. The fact that the closure of $G_\phi.\mu$ contains a unique closed orbit $G_\phi.\mu_0$ follows from \cite[9.3]{RS}.
By \cite[Lemma~3.3]{RS}, there exists $A \in \g_\phi$ such that $\mu_0=\lim_{t\to\infty} \exp(tA).\mu$. Such an $A$
can be chosen symmetric with respect to $\ip$ and also such that the subgroup $\exp(tA) \in G_\phi$ is algebraic,
which implies that the eigenvalues of some nonzero multiple of $A$ are integers. This proves (c) and (d).

To prove (a) and (b), note that $\mu_0$, being a limit of nilpotent Lie brackets, is a nilpotent Lie bracket itself.
Regardless of whether $\phi$ is or is not a pre-Einstein derivation for the nilpotent Lie algebra $\n_0=(\Rn,\mu_0)$,
it follows from \cite[Theorems~4.3,4.4]{RS} that the function $\tilde f: G_\phi \to \br$ defined by
$\tilde f (g)= \|g.\mu_0\|^2$ has a critical point. Then by assertion 3(a) of Lemma~\ref{l:tech}, the algebra $\n_0$ is
an Einstein nilradical. It is not isomorphic to $\n$, as otherwise $\n$ were an Einstein nilradical and its
$G_\phi$-orbit would be closed by 1(ii) above. This proves (a).

By assertion 3(b) of Lemma~\ref{l:tech}, either $\n_0$ is abelian, or $\phi=a \phi_0$, where $\phi_0$ is a
pre-Einstein derivation for $\n_0$. As $\Tr \phi^2=\Tr \phi$
and $\Tr \phi_0^2=\Tr \phi_0$ by \eqref{eq:pEtrace}, it follows that either $\phi=\phi_0$, or $\Tr \phi_0=0$,
or $\phi=0$. The equation $\Tr \phi_0=0$ is not possible, as $\n_0$ is an Einstein nilradical, so $\phi_0 > 0$
(by 2 of Theorem~\ref{t:preE}).
If $\phi=0$, then $G_\phi=\SL(n)$, so $\n_0$ is abelian by \cite[Theorem 8.2]{La3} and the uniqueness of $\mu_0$.

\end{proof}

\begin{remark}
One possible reason why in 2(a) the limiting algebra $\n_0$ can be abelian is that there is no Einstein nilradical whose
pre-Einstein derivation is $\phi$. This could happen, for instance, when $\phi$ is nonpositive (assertion 2 of
Theorem~\ref{t:preE}), or when $\phi >0$, but the condition of \cite[Lemma~1]{Ni3} is violated. The latter case is
illustrated by the majority of free nilpotent Lie algebras.
The pre-Einstein derivation $\phi$ of the $p$-step nilpotent free Lie
algebra $\f(m,p)$ on $m$ generators is a positive multiple of the derivation having eigenvalues $1, 2, \ldots, p$, whose
eigenspaces are the homogeneous components of $\f(m,p)$. Although both inequalities \eqref{eq:pE>0} are satisfied,
$\f(m,p)$ is not an Einstein nilradical when say $m >2$ and $p>3$,
as no Einstein nilradical can have such
pre-Einstein derivation \cite[Lemma~2]{Ni3}. So for all $\n=\f(m,p)$, which are not Einstein nilradicals, the
algebra $\n_0$ is abelian.

We emphasize that the property of a nilpotent algebra $\n=(\Rn, \mu)$ to be an Einstein nilradical depends on the
closedness of the orbit of $\mu$ by the action of the group $G_\phi$ \emph{depending on $\n$}.
As the result, the closure of a nonclosed orbit $G_\phi.\mu \subset \mathcal{V}$ may contain
more than one Einstein nilradical.
For each of them, the orbit of the Lie bracket under the action of its own group is closed, but the $G_\phi$-orbit
is closed only for one of them (assertion 2 of Theorem~\ref{t:varm}).
{
\begin{example}
Let $\n'=(\br^{m},\mu')$ be a characteristically nilpotent Lie algebra. Its pre-Einstein derivation $\phi'$ is zero,
$G_{\phi'} = \SL(m)$, and the closure of the orbit $G_{\phi'}.\mu'$ contains the abelian algebra
$\br^{m}$. Let a nilpotent algebra $\n=(\br^{m+3}, \mu)$ be the direct sum of $\n'$ and the three-dimensional
Heisenberg algebra $\h_3$ given by $[X_1,X_2]=X_3$. The pre-Einstein derivation for $\h_3$ is
$\phi_2=\frac23 \diag(1,1,2)$. By
assertion 1 of Theorem~\ref{t:prod} below, the derivation $\phi=0_{\n'} \oplus \phi_2$ is a
pre-Einstein derivation for $\n$. As $\phi$
is not positive, the closure of $G_\phi.\mu$ contains the abelian algebra $\br^{m+3}$. On the other
hand, that closure also contains a nonabelian Einstein nilradical $\hat \n=(\br^{m+3}, \hat\mu)$, the direct sum
of $\br^m$ and $\h_3$. The derivation $\hat \phi=\id_{\n'} \oplus \phi_2$ is pre-Einstein for $\hat \n$,
the $G_{\hat\phi}$-orbit of $\hat\mu$ is closed, but the $G_\phi$-orbit is not.
\end{example}
}
\end{remark}

\begin{remark}
As follows from \cite[Theorem~4.3]{RS}, if $g_0 \in G_\phi$ is a critical point of the function $f$, then the set of
minimal brackets (the brackets with the smallest norm) in the orbit $G_\phi.\mu$ is $U_\phi.g_0.\mu$, where
$U_\phi=G_\phi \cap \SO(n)$. This implies that the set of critical points of $f$ is $U_\phi g_0 (\Aut(\n) \cap G_\phi)$.
\end{remark}

Theorems~\ref{t:preE} and \ref{t:var} suggest the following strategy for deciding whether a given nilpotent Lie algebra
$\n$ is an Einstein nilradical.

\begin{enumerate}[(a)]
  \item \label{it:a}
  Find a pre-Einstein derivation $\phi$ for $\n$. In practice, this can be done just by solving a system of linear
  equations. As it follows from the proof of assertion 1(a) of Theorem~\ref{t:preE}, every maximal torus of
  $\Der(\n)$ contains a pre-Einstein derivation, so one can choose a particular maximal torus to reduce the amount of
  calculations. Also note that if $\phi$ is a candidate for being a pre-Einstein derivation, it suffices to check
  the validity of \eqref{eq:pEtrace} only for semisimple derivations $\psi$ commuting with $\phi$.

  \item \label{it:b}
  If inequalities \eqref{eq:pE>0} are not satisfied, then $\n$ is not an Einstein nilradical. Otherwise, compute the
  algebra $\g_\phi$ and use Theorem~\ref{t:var} or  Theorem~\ref{t:varm}.

  \item \label{it:c}
  Sometimes instead of (\ref{it:b}) it is easier to prove that $\n$ is an Einstein nilradical by explicitly producing
  a nilsoliton inner product. The pre-Einstein derivation gives a substantial amount of information for finding it.
  First of
  all, by assertion 2 of Theorem~\ref{t:preE}, if a nilsoliton inner product exists, it can be chosen in such a way
  that the eigenspaces of $\phi$ are orthogonal. Secondly, the pre-Einstein derivation completely determines the
  eigenvalue type. Also, as by \eqref{eq:riccinil} $\Tr (\ric_\n \phi) = 0$, it follows
  from \eqref{eq:ricn} and Theorem~\ref{t:preE} that an inner product on $\n$ is nilsoliton if and only if
  $\ric_\n = c \, (\id-\phi)$ for some $c < 0$, where the expression for $\ric_\n$ is given by the right-hand side of
  \eqref{eq:riccinilexplicit}.
\end{enumerate}

\section{Applications of Theorems~\ref{t:preE} and \ref{t:var}}
\label{s:appl}

In this section, we use Theorem~\ref{t:preE} and Theorem~\ref{t:var} to prove the following three facts:
Theorem~\ref{t:nice}, which gives an easy-to-check condition for a nilpotent Lie algebra with a nice basis to be an
Einstein nilradical, Theorem~\ref{t:rvsc}, which says that the property of being an Einstein nilradical
is, in fact, a property of the complexification of a real nilpotent Lie algebra, and Theorem~\ref{t:prod}, which says
that the direct sum of nilpotent Lie algebras is an Einstein nilradical if and only if the summands are.

\begin{proof}[Proof of Theorem~\ref{t:nice}]
In brief, the proof goes as follows. We fix a nice basis and compute the pre-Einstein derivation and the group $G_\phi$.
Then we show that the closedness of the orbit of the diagonal subgroup of $G_\phi$ is completely controlled by the
convexity condition (i), and that if there is a critical point somewhere on $G_\phi$, then there is one on the diagonal.

Let $\mathcal{B}=(X_1, \ldots, X_n)$ be a nice basis for a nonabelian nilpotent Lie algebra $\n=(\Rn,\mu)$.
Let $\Lambda=\{(i,j,k) \, : \, i < j,\, c_{ij}^k \ne 0 \},\; \#\Lambda = m >0$. In a Euclidean space $\Rn$ with the inner
product $(\cdot, \cdot)$ and an orthonormal basis $f_1, \ldots, f_n$, define the finite subset
$\mathbf{F}=\{Y_a= f_i+f_j-f_k \, : \, a=(i,j,k) \in \Lambda\}$ and denote by $L$ the affine span of $\mathbf{F}$,
the smallest affine subspace of $\Rn$ containing $\mathbf{F}$.
Fix an arbitrary ordering of the set $\Lambda$ and define an $m \times n$ matrix $Y$
whose $a$-th row has $1$ in the columns $i$ and $j$, $-1$ in the column $k$, and zero elsewhere, where
$a=(i,j,k) \in \Lambda$.

The proof follows the steps at the end of Section~\ref{s:thpE}.

Fix the nice basis $\mathcal{B}$ and an inner product $\ip$ on $\n$, with respect to which the
nice basis is orthonormal. We say that an operator (an inner product) is diagonal, if its matrix with
respect to $\mathcal{B}$ is diagonal. For a vector $v=(v_1, \ldots, v_n)^t \in \Rn$, denote $v^D$ the
diagonal operator defined by $v^D X_i := v_i X_i$. Note that $v^D \in \Der(\n)$ if and only if $v \in \Ker Y$.
Similarly, for a vector
$r=(r_1, \ldots, r_n)^t \in \Rn$, with $r_i \ne 0$ denote $\ip_r$ the diagonal inner product defined
by $\<X_i, X_j\ra_r:= r_i^2 \delta_{ij}$.

As it follows from \eqref{eq:riccinilexplicit}, the Ricci operator $\ric_r$ of the metric algebra
$(\n, \ip_r)$ is diagonal and
\begin{equation}\label{eq:ricdiag}
\ric_r = (-\tfrac{1}{2} Y^t \beta)^D
\end{equation}
for a vector $\beta \in \br^m$ with
$\beta_a=(c_{ij}^kr_i^{-1}r_j^{-1}r_k)^2$, where $a=(i,j,k) \in \Lambda$.

We want to show that $\n$ admits a diagonal pre-Einstein derivation (note that $\n$ may have more than one nice basis).
Let $\phi$ be a diagonal derivation satisfying \eqref{eq:pEtrace} for all diagonal derivations $\psi$. Then
$\phi=v^D$, with $v \in \Ker Y$, and $(v,u)=([1]_n,u)$, for all $u \in \Ker Y$.
It follows that $v=[1]_n+Y^t\gamma$ for some $\gamma \in \br^m$, and $0=Yv=[1]_m + YY^t\gamma$,
so $\gamma=-\alpha$. Hence
\begin{equation}\label{eq:preEdiag}
\phi=\id-(Y^t\alpha)^D.
\end{equation}
To show that such a $\phi$ is indeed a pre-Einstein derivation, consider an arbitrary $\psi \in \Der(\n)$. By
\eqref{eq:riccinil}, $\Tr(\ric \; \psi)=0$, for any inner product on $\n$. In particular, for any diagonal
inner product $\ip_r$, equation \eqref{eq:ricdiag} implies that
$\Tr((Y^t \beta)^D \psi)=0$. Choose $v \in \Rn$ in such a way that $v^D$ and $\psi$ have the same diagonal entries. Then
$0=\Tr((Y^t \beta)^D v^D) = (\beta, Yv)$. This holds for any vector $\beta \in \br^m$ with the components
$\beta_a=(c_{ij}^kr_i^{-1}r_j^{-1}r_k)^2$, where $a=(i,j,k) \in \Lambda$.
Choose one such $a=(i,j,k)$ and take $r_i=r_j=e^{-t}$, and $r_l=1$ for $l \ne i,j$.
Substituting this to $(\beta, Yv)=0$, dividing by $e^{4t}$ and taking the limit when $t \to \infty$ we obtain
$(Yv)_a=0$. It follows that $Yv=0$, so that $v^D$, the
projection of $\psi$ to the diagonal, is also a derivation. Then $\Tr (\phi \psi)= \Tr (\phi v^D)= \Tr v^D= \Tr \psi$,
so the derivation $\phi$ given by \eqref{eq:preEdiag} (which is clearly semisimple and real) is indeed a pre-Einstein
derivation.

As in Theorem~\ref{t:var}, let $\g_\phi$ be the Lie algebra defined by \eqref{eq:subalgebra}, $G_\phi$ be the
Lie group defined by \eqref{eq:Ggphi}, and $f: G_\phi \to \br$ be the function defined by
$f(g)=\|g.\mu\|^2$, where $\mu$ is the Lie bracket of $\n$.

We first consider the behavior of $f$ restricted to the diagonal subgroup of $G_\phi$. Let $\g_D$ be the
abelian subalgebra consisting of all the diagonal elements of $\g_\phi$.

{
\begin{lemma}\label{l:nicediag}
1. The following three conditions are equivalent:
\begin{enumerate}[\rm (i)]
    \item
    the projection of the origin of $\Rn$ to $L$ lies in the interior of $\Conv(\mathbf{F})$.
    \item
    there exists a vector $\al \in \br^m$ with positive coordinates satisfying $YY^t \al = [1]_m$.
    \item
    the function $f_D:\g_D \to \br$ defined by $f_D(A)=f(\exp(A))$ for $A \in \g_D$ has a critical point.
\end{enumerate}

2. If $A \in \g_D$ is a critical point of $f_D:\g_D \to \br$, then $\exp(A)$ is a critical point of $f:G_\phi \to \br$,
so $\n$ is an Einstein nilradical by Theorem~\ref{t:var}.
\end{lemma}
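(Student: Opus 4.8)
The plan is to make everything as explicit as possible on the diagonal torus, where the group action degenerates to a linear action on exponents and the norm-squared function becomes a sum of exponentials. First I would parametrize: an element $A \in \g_D$ is of the form $A = w^D$ for a vector $w \in \br^n$, and the constraints defining $\g_\phi$ in \eqref{eq:subalgebra} force $w \perp [1]_n$ and $w \perp v$, where $v = [1]_n + Y^t\gamma$ is the exponent vector of $\phi$; equivalently $w \perp [1]_n$ and $w \in (Y^t\alpha)^\perp$. Note $\exp(w^D).\mu$ scales the structure constant $c_{ij}^k$ by $e^{w_i + w_j - w_k} = e^{(Yw)_a}$ for $a=(i,j,k)$, so $f_D(w^D) = \sum_{a\in\Lambda} (c_{ij}^k)^2 e^{2(Yw)_a}$. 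The gradient condition is then that $\sum_a (c_{ij}^k)^2 e^{2(Yw)_a} (Yw|_a\text{-linearized})$ vanishes on the constraint subspace, i.e.\ that the vector $Y^t\beta \in \br^n$ (with $\beta_a = (c_{ij}^k)^2 e^{2(Yw)_a} > 0$) lies in $\Span([1]_n, v) = \Span([1]_n, Y^t\alpha)$.

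The equivalence (ii)$\Leftrightarrow$(iii) is then essentially a change of variables in a strictly convex optimization. I would argue that $f_D$ is a convex function of $w$ (a positive combination of exponentials of linear functionals), so it has a critical point on the affine slice $\{w : w\perp[1]_n,\ w\perp Y^t\alpha\}$ if and only if it is bounded below there, if and only if it attains its minimum there. At a minimizer, the stationarity condition above says $Y^t\beta$ is a combination of $[1]_n$ and $Y^t\alpha$ for some $\beta$ with strictly positive entries; pairing with $w=Y^t\alpha$ itself and using $Y[1]_n$-type identities together with the normalization $YY^t\alpha = [1]_m$ (which came from \eqref{eq:preEdiag}) should pin down the coefficients and reduce $Y^t\beta \in \Span([1]_n, Y^t\alpha)$ to the single equation $YY^t\beta' = [1]_m$ for a rescaled $\beta'$ with positive entries — giving exactly (ii). Conversely, given $\alpha>0$ solving $YY^t\alpha=[1]_m$, the point $w=0$ (so $\beta_a=(c_{ij}^k)^2$) can be adjusted, or more cleanly one directly exhibits a critical point of $f_D$ from such an $\alpha$, using that the Hessian is positive semidefinite.

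For (i)$\Leftrightarrow$(ii): the projection $P$ of the origin onto the affine span $L$ of $\mathbf{F}$ lies in $\mathrm{int}\,\Conv(\mathbf{F})$ iff $P = \sum_a \lambda_a Y_a$ with $\lambda_a > 0$, $\sum_a \lambda_a = 1$; and "$P$ is the foot of the perpendicular from $0$" means $P = \sum_a\lambda_a Y_a$ with $0 - P \perp (L - L)$, i.e.\ $\sum_a\lambda_a Y_a \perp (Y_a - Y_b)$ for all $a,b$, equivalently $Y(\text{that vector})$ is constant in each coordinate — unwinding, $YY^t\lambda = \kappa[1]_m$ for a scalar $\kappa = \|P\|^2 \ge 0$. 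One must check $\kappa \ne 0$: if $\kappa = 0$ then $P \perp Y^t\lambda$ forces $\|P\|=0$, i.e.\ the origin lies in $L$, and then interiority of the projection ($=0$) in $\Conv(\mathbf{F})$ still gives a strictly positive affine combination summing to $1$ with $YY^t\lambda = 0$; handling whether this degenerate case can actually occur under the standing hypothesis that $\n$ is nonabelian (so $m>0$ and $Y\ne0$) is where I expect the only real friction. Rescaling $\alpha = \lambda/\kappa$ (when $\kappa>0$) yields $YY^t\alpha=[1]_m$ with $\alpha>0$, and conversely $YY^t\alpha=[1]_m$ with $\alpha>0$ gives, after normalizing $\lambda=\alpha/\sum\alpha_a$, a strictly positive affine combination whose value is the orthogonal projection of $0$ onto $L$.

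Finally, assertion~2 is the statement that a diagonal critical point is a genuine critical point of $f$ on all of $G_\phi$. This should follow from Lemma~\ref{l:tech}: by part~2(a) of that lemma, $\tfrac{d}{dt}|_{t=0} f(\exp(tB)\exp(A)) = 2\Tr(\ric_{\exp(A).\mu}\, B)$ for $B\in\g_\phi$, and by part~2(b) this derivative vanishes for $B \perp \mathfrak{Z}(\psi)$; since $\ric$ commutes with the diagonal symmetric derivation $\phi$ and, by the nice-basis structure \eqref{eq:ricdiag}, is itself diagonal, $\Tr(\ric_{\exp(A).\mu}\,B)$ depends only on the diagonal part $B_D$ of $B$, which lies in $\g_D$; so the full gradient of $f$ at $\exp(A)$ vanishes as soon as the gradient of $f_D$ at $A$ does. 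Then Theorem~\ref{t:var} (equivalence of (iii) and (i)) immediately gives that $\n$ is an Einstein nilradical. The main obstacle is bookkeeping the constrained stationarity condition in part~1 — translating "$Y^t\beta \in \Span([1]_n, Y^t\alpha)$ with $\beta>0$" into the clean equation $YY^t\alpha=[1]_m$ and verifying the non-degeneracy $\kappa\ne0$ — rather than anything conceptually deep.
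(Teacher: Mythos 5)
Your overall architecture is the paper's: write $f_D$ as a positive sum of exponentials of linear functionals of the exponent vector, identify stationarity with a positive-combination condition on the rows of $Y$, prove (i)$\Leftrightarrow$(ii) by computing the foot of the perpendicular from the origin to $L$, and deduce part 2 from the diagonality of the Ricci operator together with assertion 2(a) of Lemma~\ref{l:tech}. Your (iii)$\Rightarrow$(ii) step (constrained stationarity gives $Y^t\beta\in\Span([1]_n,Y^t\al)$ with $\beta>0$; applying $Y$ and pairing with $\al$, using $Y[1]_n=[1]_m$ and $YY^t\al=[1]_m$, normalizes the multiplier) does go through, and part 2 is exactly the paper's argument.

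The genuine gap is in (ii)$\Rightarrow$(iii), the existence of a critical point. Your chain ``has a critical point iff bounded below iff attains its minimum'' has a false middle link: a positive convex sum of exponentials can be bounded below without attaining its infimum or having a critical point ($e^{-x}$ on $\br$ already does this), and one cannot ``directly exhibit'' the critical point from $\al$ --- it solves a transcendental system, and positive semidefiniteness of the Hessian gives nothing about existence. What is needed, and what the paper supplies, is a coercivity argument: for $e(x)=\sum_a b_ae^{(w_a,x)}$ with $b_a>0$, the sublevel set $e^{-1}(0,M]$ is contained in the polyhedron $\bigcap_a\{x:(w_a,x)\le\ln(Mb_a^{-1})\}$, which is \emph{bounded} precisely when the origin lies in the relative interior of $\Conv(\{w_a\})$; hence the minimum is attained there. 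Applied with $w_a=Y_a-P$ on the subspace $V=\{v:(v,[1]_n)=(v,Y^t\al)=0\}$, the positivity of $\al$ (equivalently, condition (i)) is exactly what puts the origin in that relative interior. Two smaller loose ends: your worry about $\kappa=\|P\|^2=0$ is dispatched in one line, since every $Y_a=f_i+f_j-f_k$ has coordinate sum $1$, so $L$ lies in the affine hyperplane $\{\sum_ix_i=1\}$, which misses the origin and forces $P\ne0$; and $\exp(w^D)$ scales $c_{ij}^k$ by $e^{w_k-w_i-w_j}=e^{-(Yw)_a}$, not $e^{+(Yw)_a}$ --- harmless here because the convex-hull criterion is invariant under $w_a\mapsto-w_a$, but worth fixing.
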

\begin{proof}
1. First we show that the geometric condition (i) and the algebraic condition (ii) are equivalent.
Since $Y[1]_n=[1]_m$, all the points of $\mathbf{F}$ lie on the hyperplane $\{\sum_i x_i = 1\} \subset \Rn$. It
follows that $P$, the projection of the origin to $L$, is nonzero and $YP=\|P\|^2 [1]_m$. An easy computation shows
that $P=\|P\|^2 Y^t\alpha$, where $\alpha \in \br^m$ is an arbitrary vector satisfying the equation $YY^t \al = [1]_m$
(in particular, such an $\al$ exists).
For any such $\alpha, \quad (\al,[1]_m)= \|P\|^{-2}>0$, so $P=(\al,[1]_m)^{-1} Y^t\alpha$. Therefore,
if a vector $\al$ satisfying $YY^t \al = [1]_m$ can be chosen in such a way that all its coordinates are positive, then
$P$ lies in the interior of $\Conv(\mathbf{F})$. Conversely, suppose that $P$ lies in the interior of
$\Conv(\mathbf{F})$ and denote $C=\{a \, :\, Y_a \text{ is a corner point of } \Conv(\mathbf{F})\}$. Then
$P=Y^t \beta$ for some $\beta \in \br^m$, with $(\beta,[1]_m)=1$, such that $\beta_a>0$, if $a \in C$, and
$\beta_a=0$ otherwise.
For every $b \notin C$, let $Y_b=\sum_{a \in C} \la_{ab} Y_a$, with $\la_{ab} \ge 0$ and $\sum_{a \in C} \la_{ab}=1$.
Then, for a small $\varepsilon > 0$, $\sum_{a=1}^m \beta_a Y_a =
\sum_{a \in C} (\beta_a - \sum_{b \notin C} \varepsilon \la_{ab}) Y_a + \sum_{b \notin C} \varepsilon Y_b
=\sum_{a=1}^m \beta_a' Y_a$, with all the $\beta_a'$ positive and $(\beta',[1]_m)=1$. So $P=Y^t\beta'$, and
$\al=\|P\|^{-2} \beta'$ is the required vector, as $YP=\|P\|^2 [1]_m$.

To show that (i) and (ii) are equivalent to (iii), we start with the following observation: let $\{w_a\}$
be a finite set of vectors in a Euclidean space $V$, with $\Span(w_a)= V' \subset V$, and let $b_a$ be
positive numbers. Then the function $e: V \to \br$ defined by
$e(x)=\sum_a b_a e^{(w_a,x)}$ has a critical point if and only if it attains its minimum if and only if the origin of
$V$ lies in the interior (relative to $V'$) of the convex hull of the vectors $w_a$. Since $e(x+y)=e(x)$ for
$y \perp V'$, we lose no generality by replacing $V$ by $V'$ and $e$ by $e_{|V'}$. The fact that a critical point,
if it exists, is a minimum, is clear, as $e(x)$ is positive and convex. Next, at a critical point
$x, \quad de(x)=\sum_a b_a e^{(w_a,x)} w_a=0$, so $0$ lies in the interior of the convex hull of the $w_a$'s.
Conversely, if $M$ is a large positive number, the preimage $e^{-1}(0,M]$ is closed, nonempty, and is contained in the
convex polyhedron $\cap_a \{x \, : \: (w_a,x) \le \ln(Mb_a^{-1})\}$. If the origin lies in the interior of the convex
hull of the $w_a$'s, that polyhedron is bounded, so $e^{-1}(0,M]$ is compact, which proves the claim.

By \eqref{eq:preEdiag}, $\g_D=\{v^D \, : \, (v, [1]_n)=(v, Y^t\alpha)=0 \}$.
Then we obtain $f_D(v^D)=\|\exp(v^D).\mu\|^2=$ $ \sum_{a=(i,j,k) \in \Lambda} (c_{ij}^k)^2 \exp(2(v_k-v_i-v_j))
= \sum_{a=(i,j,k) \in \Lambda} (c_{ij}^k)^2 \exp(-2(Yv)_a)$. By the observation from the previous paragraph,
with $V=\{v :(v, [1]_n)=(v, Y^t\alpha)=0 \}$ and $w_a=Y_a-P$, the projections of the
vectors $Y_a$ to $V$, we find that $f_D$ has a critical point if and only if the origin lies in the interior of the
convex hull of the vectors $Y_a-P$, that is, if and only if $P$ lies in the interior of $\Conv(\mathbf{F})$.

2. Let $v^D \in \g_D$ be a critical point of $f_D$ and let $\mu'=\exp(v^D).\mu$. The basis $\mathcal{B}$ is still
nice for the Lie algebra $\n' =(\Rn, \mu')$, so by \eqref{eq:ricdiag} the Ricci operator $\ric_{\n'}$ of the
metric Lie algebra $(\n', \ip)$ is diagonal. By 2(a) of Lemma~\ref{l:tech},
$\frac{d}{dt}_{|t=0} f(\exp(tA).\exp(v^D))=2 \Tr (\ric_{\n'} A)$, for any $A \in \End(\n)$. The expression on the
right-hand side vanishes for all the $A$'s with the zero diagonal, as $\ric_{\n'}$ is diagonal, and for all
$A \in \g_D$, as $v^D$ is critical for $f_D$. Hence $\exp(v^D)$ is a critical point of $f$.
\end{proof}
}

Lemma~\ref{l:nicediag} proves the ``if" part of the theorem. To prove the ``only if" part, we will show that if the
function $f$ has a critical point somewhere on $G_\phi$, then it has a critical point on $\exp \g_D$.

We will need the following lemma, which slightly refines the Cartan
decomposition of the reductive real algebraic group $G_\phi$.
On the Lie algebra level, we have $\g_\phi=\mathfrak{u}_\phi \oplus \p=\mathfrak{u}_\phi \oplus \p_0 \oplus \g_D$,
where $\mathfrak{u}_\phi$ is the subalgebra of all the
skew-symmetric matrices from $\g_\phi$, $\p$ is the subspace of all the symmetric matrices from $\g_\phi$,
and $\p_0$ is the subspace of all the matrices with the zero diagonal from $\p$. On the Lie group level, let
$U_\phi = \mathrm{O}(n) \cap G_\phi$, a maximal compact subgroup of $G_\phi$, $G_D=\exp\g_D$, a maximal torus in
$G_\phi$, and let $P_0= \exp \p_0$.

{
\begin{lemma}\label{l:polar}
The map $U_\phi \times P_0 \times G_D \to G_\phi$ defined by $(u,p,g_D) \to upg_D$ for
$u \in U_\phi, \, p \in P_0, \, g_D \in G_D$, is surjective.
\end{lemma}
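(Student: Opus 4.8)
The plan is to establish the surjectivity of the map $U_\phi \times P_0 \times G_D \to G_\phi$ by combining the standard Cartan (polar) decomposition of the reductive group $G_\phi$ with a refinement of the exponential image of the symmetric part, exploiting the structure of $\g_\phi$ as the centralizer of a \emph{diagonal} semisimple operator $\phi$.

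First I would invoke the Cartan decomposition $G_\phi = U_\phi \cdot \exp \p$, valid because $G_\phi$ is a reductive real algebraic group whose Lie algebra $\g_\phi$ is stable under the transpose (adjoint) with respect to $\ip$, so that $\g_\phi = \mathfrak{u}_\phi \oplus \p$ with $\mathfrak{u}_\phi$ the skew-symmetric and $\p$ the symmetric elements. Thus it suffices to show that $\exp \p \subset P_0 \cdot G_D$, i.e.\ that every positive-definite symmetric $h = \exp(p)$ with $p \in \p$ can be written as $h = p' g_D$ with $p' \in P_0 = \exp \p_0$ and $g_D \in G_D = \exp \g_D$. Here the key structural point is that $\p = \p_0 \oplus \g_D$ as a \emph{vector space}, but the exponential does not respect this splitting, so a direct argument is needed.

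The main step is an iteration/fixed-point argument. Given $h = \exp(p)$, let $\mathrm{diag}(h)$ denote the diagonal part; since $h$ is positive definite and symmetric its diagonal entries are positive, so $g_D := (\log \mathrm{diag}(h))^D$... but one must check $g_D \in \g_D$, i.e.\ that this diagonal operator lies in $\g_\phi = \z(\phi) \cap \Ker t$. Every diagonal operator commutes with the diagonal $\phi$, so the centralizer condition is automatic; the trace conditions defining $\g_\phi$ (namely $\Tr A = \Tr(A\phi) = 0$) cut out the subspace $\g_D$, and one shows that replacing $h$ by $h g_D^{-1}$ for a suitable $g_D \in G_D$ kills the ``wrong'' diagonal part. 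Concretely I would argue: consider $h g_D^{-1}$ and show that by an appropriate choice of $g_D$ one can force $h g_D^{-1}$ to have logarithm in $\p_0$. Equivalently, consider the map $\p_0 \times \g_D \to \p$, $(p', g_D) \mapsto \log(\exp(p')\exp(g_D))$ — or rather the corresponding group-level map $P_0 \times G_D \to \exp\p \subset G_\phi$. One shows this is a diffeomorphism onto $\exp \p$: it is a local diffeomorphism at the identity (its differential there is the identity on $\p_0 \oplus \g_D = \p$), and properness / injectivity follows because $G_D$ commutes with $\phi$ and $P_0 \cdot G_D$ meets each $U_\phi$-coset of $\exp\p$ exactly once — here one uses that $G_D$ is precisely the connected centralizer-type diagonal torus and $\exp\p$ is contractible, so a dimension and connectedness count forces surjectivity.

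The hard part will be ruling out the possibility that $\exp(P_0) \cdot \exp(G_D)$ fails to exhaust $\exp\p$ — that is, controlling the interaction between the off-diagonal symmetric part and the diagonal part under the group multiplication, since $P_0$ is not a subgroup. I expect the cleanest route is: show the map $\Psi: \p_0 \oplus \g_D \to \exp\p$, $(X, D) \mapsto \exp(X)\exp(D)$, is a bijection. Surjectivity follows from Cartan's fixed-point / convexity theorem applied to the $G_D$-action: for $h \in \exp\p$, the function $g_D \mapsto \|\log(h\, g_D^{-1})\|^2$ (or a related convex function on the symmetric space $G_\phi/U_\phi$ restricted to the flat $G_D$) is proper and strictly convex along $G_D$, hence attains a minimum at some $g_D$, and the first-order condition at that minimum forces $\log(h g_D^{-1}) \perp \g_D$ with respect to $Q$, i.e.\ $\log(h g_D^{-1}) \in \p_0$ (using that the orthogonal complement of $\g_D$ inside $\p$ is exactly $\p_0$, which holds since the diagonal and zero-diagonal symmetric matrices are $Q$-orthogonal). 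Combining with the Cartan decomposition $G_\phi = U_\phi \exp\p$ yields $G_\phi = U_\phi P_0 G_D$, completing the proof.
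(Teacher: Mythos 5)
Your overall geometric strategy --- Cartan decomposition of $G_\phi$ plus projection onto the flat determined by $\g_D$ in the Hadamard symmetric space $G_\phi/U_\phi$, with orthogonality of the minimizing geodesic forcing the perpendicular component into $\p_0$ --- is the same as the paper's (which follows Wolf--Zierau). But there is a genuine error in your reduction. You claim it suffices to show $\exp\p \subset P_0\, G_D$, and later that $\Psi:\p_0\oplus\g_D\to\exp\p$, $(X,D)\mapsto\exp(X)\exp(D)$, is a bijection onto $\exp\p$. This is false: if $h=\exp(X)\exp(D)$ with $X\in\p_0$, $D\in\g_D$ (both symmetric) is itself symmetric, then $h=h^t=\exp(D)\exp(X)$, so $\exp(X)$ and $\exp(D)$ commute, which for positive definite symmetric matrices forces $[X,D]=0$. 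Hence the symmetric elements of $P_0G_D$ form only the thin set $\{\exp(X)\exp(D):[X,D]=0\}$, which does not exhaust $\exp\p$ (already for $\SL(2)$ with $\g_D$ the diagonal and $\p_0$ the off-diagonal symmetric matrices, $[X,D]=0$ forces $X=0$ or $D=0$). For the same reason the quantity $\log(h\,g_D^{-1})$ in your first-order condition does not make sense as an element of $\p$: the product $h\,g_D^{-1}$ of two non-commuting positive definite matrices is not symmetric, so it is not in the image of $\exp|_{\p}$.

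The statement your distance-minimization argument actually delivers is the weaker containment $\exp\p\subset U_\phi P_0 G_D$, which still suffices since $U_\phi U_\phi=U_\phi$. To get it you must work modulo $U_\phi$ throughout, i.e.\ with $\pi(g)=g^tg$ and the right action $x\mapsto g^txg$ on $M=\pi(G_\phi)$, as the paper does: the foot $\pi(g_D)$ of the perpendicular from $\pi(g)$ to the complete totally geodesic flat $M_D=\pi(\exp\g_D)$ (which exists because $M$ is Hadamard and $M_D$ is closed and convex), translated back to the basepoint by $g_D^{-1}$, yields a geodesic $s\mapsto\pi(\exp(sA))$ with $A\in\p$ and $Q(A,\g_D)=0$, that is $A\in\p_0$; hence $\pi(g)=\pi(\exp(cA)g_D)$ for some $c\ge 0$ and therefore $g=u\exp(cA)g_D$ with $u\in U_\phi$. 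With that correction (and discarding your first, inconclusive ``iteration on diagonal parts'' sketch, which you rightly abandon) your argument coincides with the paper's proof; the supporting facts you cite --- that $\p=\p_0\oplus\g_D$ is a $Q$-orthogonal splitting and that $\g_D$ is abelian, so $M_D$ is a flat totally geodesic submanifold --- are correct and are exactly what the paper uses.
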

\begin{proof}[Proof of Lemma~\ref{l:polar}]
We follow the approach of \cite{WZ}.
Let $\pi(g)=g^t g$ for $g \in G_\phi$, and let $M=\pi(G_\phi)$.
Clearly, $\pi(g_1) = \pi(g_2)$ if and only if $g_2=ug_1$ for some $u \in U_\phi$.
The action of $G_\phi$ on itself from the right defines the action on the homogeneous space $M=G_\phi/U_\phi$ by
$x \to g^txg$ for $x \in M$. An inner product $Q$ on $\g_\phi$ defined by $Q(A_1,A_2)=\Tr(A_1A_2^t)$ is
$\mathrm{Ad}(U_\phi)$-invariant and gives, by right translations, a bi-invariant Riemannian metric on $M$.
The space $M$ with that metric is an Hadamard symmetric space with the de Rham decomposition
$\prod_{j=1}^p \SL(d_j)/\SO(d_j) \times \br^{p-2}$, where $d_1, \ldots, d_p$ are the multiplicities of the eigenvalues
of the pre-Einstein derivation $\phi$. The space $\g_D$ is a maximal abelian subalgebra in $\p=\mathfrak{u}_\phi^\perp$,
so $M_D=\pi(\exp \g_D) \subset M$ is a complete
flat totally geodesic submanifold. For any $g \in G$, let $\gamma=\gamma(s)$ be (the unique) geodesic realizing the
distance $d(g)$ from $\pi(g)$ to $M_D$, with $s \in [0,d(g)]$ an arclength parameter, such that
$\gamma(0)=\pi(g_D) \in M_D,\; \gamma(d(g))= \pi(g)$. Then $g_D^{-1}\gamma(s)g_D^{-1}$ is a geodesic of $M$ passing
through the basepoint $o=\pi(\id) \in M$ whose tangent vector is orthogonal to the tangent space of $M_D$ at $o$. It
follows that $g_D^{-1}\gamma(s) g_D^{-1} = \pi(\exp(sA))$ for some symmetric matrix $A \in \g_\phi$ such that
$Q(A,\g_D)=0$, that is, for some $A \in \p_0$. So $\pi(g)=g_D\exp(d(g)A) g_D=\pi(\exp(\frac12 d(g)A) g_D)$, as required.
\end{proof}
}

Assume that $\n$ is an Einstein nilradical and that $g_0 \in G_\phi$ is a critical point of the function $f$.
By Lemma~\ref{l:polar}, $g_0=upg_D$ for some $u \in U_\phi, \, p \in P_0, \, g_D \in G_D$.
As $f(ug)=f(g)$ for any $u \in U_\phi$ and $g \in G_\phi$, the point $g_1=pg_D$ is also critical.

Denote $\mu'=\exp(g_D).\mu$ and $\n' =(\Rn, \mu')$. The basis $\mathcal{B}$ is still a nice basis for $\n'$.
Moreover, by 1 of Lemma~\ref{l:tech}, $\phi$ is a pre-Einstein derivation of $\n'$, the algebra $\g_\phi$ and the
group $G_\phi$ for $\n'$ are the same as those for $\n$, and $f_{\n'}(g)=f_{\n}(gg_D)$ for $g \in G_\phi$.
We can therefore replace $\n$ by $\n'$ and
assume that the function $f$ has a critical point $p \in P_0$. Then $p=\exp A$ for some $A \in \p_0$. Consider a function
$F(t)=f(\exp(tA))$ for $t \in \br$. The function $F$ has a critical point at $t=1$, as $\exp(A)$ is critical for $f$,
and at $t=0$, as by 2(a) of Lemma~\ref{l:tech},
$F'(0) = \frac{d}{dt}_{|t=0} f(\exp(tA))= 2 \Tr (\ric_{\n} \;  A)=0$, since $\ric_{\n}$
is diagonal by \eqref{eq:ricdiag}. Let $\{E_i\}$ be an orthonormal basis of the eigenvectors of the symmetric matrix $A$,
with the corresponding eigenvalues $\nu_i$, and let $C_{ij}^k=\<[E_i,E_j],E_k\ra$ be the structural constants with respect
to the basis $\{E_i\}$. Then $F(t)=f(\exp(tA))=\sum_{i,j,k} (C_{ij}^k)^2 \exp(2t(\nu_i+\nu_j-\nu_k))$, so
$F''(t) \ge 0$. Since the convex function $F(t)$ has two critical points, it must be a constant, which implies
that $\nu_i+\nu_j=\nu_k$ for all the triples $(i,j,k)$ with $C_{ij}^k \ne 0$, so $A \in \Der(\n)$ and
$p \in \Aut(\n)$. As $f(g)= f(gp^{-1})$ for any $p \in \Aut(\n)$ and $g \in G_\phi$, the identity
is also a critical point for $f$.

Thus if $\n$ is an Einstein nilradical, then the function $f$ has a critical point on $\exp \g_D$, which proves
the ``only if" part of the theorem by assertion 1 of Lemma~\ref{l:nicediag}.
\end{proof}

The above proof shows that if a nilpotent Lie algebra with a nice basis is an Einstein nilradical, then the nilsoliton
inner product can be chosen diagonal. This is by no means obvious, as a nilpotent Lie algebra can have two quite
different nice bases, the easiest example being a direct sum of two copies of the Heisenberg algebra $\h_3$ given by
$[X_1,X_2]=X_5,\; [X_3,X_4]=X_6$. The basis $\{X_1 \pm X_3, X_2 \pm X_4, X_5 \pm X_6\}$ for this algebra is also nice,
with a different number of nonzero structural constants.

Also note that a nilsoliton inner product for a nilpotent Lie algebra with a nice basis can sometimes be found
explicitly.
For instance, as it follows from \eqref{eq:ricdiag} and \eqref{eq:preEdiag}, if $\rk \, Y = m$, the inner product
$\ip_r$ is nilsoliton, if we choose $r =(e^{s_1}, \ldots, e^{s_n}) \in \Rn$ in such a way that the $s_i$'s
satisfy the linear system $(Ys)_a= \sqrt{\alpha_a} \, |c_{ij}^k|^{-1}$ for $a=(i,j,k) \in \Lambda$.

Another application of Theorems~\ref{t:preE} and \ref{t:var} is the following theorem.

\begin{theorem} \label{t:rvsc}
Let $\n_1$ and $\n_2$ be two (real) nilpotent Lie algebras, whose complexifications are isomorphic as the complex
nilpotent Lie algebras. If $\n_1$ is an Einstein nilradical, then so is $\n_2$, with the same eigenvalue type.
\end{theorem}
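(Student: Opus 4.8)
The plan is to transfer the criterion from Theorem~\ref{t:var} across the complexification, exploiting the fact that the pre-Einstein derivation, the group $G_\phi$, and the orbit-closedness condition all have natural complex analogues that behave well under field extension. First I would recall that, by Theorem~\ref{t:preE}(1)(b)--(c), the pre-Einstein derivation of a Lie algebra is unique up to automorphism and has rational eigenvalues; since $\Der(\n_i^{\bc}) = \Der(\n_i)\otimes\bc$ and the defining trace equation \eqref{eq:pEtrace} is linear with rational coefficients, a pre-Einstein derivation $\phi_i$ of $\n_i$ is also a pre-Einstein derivation of $\n_i^{\bc}$. Fixing a complex isomorphism $\n_1^{\bc}\cong\n_2^{\bc}$, the two derivations $\phi_1$ and $\phi_2$, viewed on the common complex algebra, are conjugate by a complex automorphism, hence have the same eigenvalues with the same multiplicities; this already gives the ``same eigenvalue type'' assertion and lets me normalize so that $\phi_1,\phi_2$ have identical eigenvalue data $(\la_j;d_j)$.

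Next I would set up the algebraic-geometry framework. Writing $n=\dim_{\br}\n_1=\dim_{\br}\n_2$ and working in $\mathcal{V}=\wedge^2(\Rn)^*\otimes\Rn$ with its complexification $\mathcal{V}^{\bc}$, the brackets $\mu_1,\mu_2$ and the derivations $\phi_1,\phi_2$ determine reductive groups $G_{\phi_i}\subset\SL(n,\br)$ as in \eqref{eq:Ggphi}, whose complexifications $G_{\phi_i}^{\bc}$ are the analogous subgroups of $\SL(n,\bc)$ defined by the same determinant conditions on the (now complex) eigenspaces of $\phi_i$. Because $\phi_1$ and $\phi_2$ have the same eigenvalue type, $G_{\phi_1}^{\bc}$ and $G_{\phi_2}^{\bc}$ are conjugate inside $\SL(n,\bc)$ via an element carrying the eigenspace decomposition of $\phi_1$ to that of $\phi_2$ (combined with the fixed isomorphism $\n_1^{\bc}\cong\n_2^{\bc}$); under this conjugation $\mu_1^{\bc}$ and $\mu_2^{\bc}$ lie on the same $G^{\bc}$-orbit in $\mathcal{V}^{\bc}$, where $G^{\bc}$ denotes either complexified group. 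The key technical input is now the Kempf--Ness / Richardson--Slodowy type fact (already invoked via \cite{RS} in the proof of Theorem~\ref{t:varm}) that closedness of a real orbit $G_{\phi_i}.\mu_i\subset\mathcal{V}$ is equivalent to closedness of the complex orbit $G_{\phi_i}^{\bc}.\mu_i^{\bc}\subset\mathcal{V}^{\bc}$ — this is the standard statement that a real form of a reductive group has closed orbit on a real point iff the complex orbit is closed (e.g.\ \cite[Theorem~4.3--4.4]{RS} applied over $\bc$, together with the Galois-descent argument for real points). Granting this, the chain runs: $\n_1$ Einstein nilradical $\Rightarrow$ (Theorem~\ref{t:var}) $G_{\phi_1}.\mu_1$ closed $\Rightarrow$ $G_{\phi_1}^{\bc}.\mu_1^{\bc}$ closed $\Rightarrow$ (same complex orbit, conjugate groups) $G_{\phi_2}^{\bc}.\mu_2^{\bc}$ closed $\Rightarrow$ $G_{\phi_2}.\mu_2$ closed $\Rightarrow$ (Theorem~\ref{t:var} again) $\n_2$ is an Einstein nilradical.

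The main obstacle I anticipate is justifying rigorously the equivalence ``real orbit closed $\iff$ complex orbit closed'' in exactly the form needed: one must check that $G_{\phi_i}^{\bc}$ is genuinely the complexification of the real reductive group $G_{\phi_i}$ (so that the Cartan decomposition and the moment-map machinery of \cite{RS} apply), and that $\mu_i$ really is a real point of $\mathcal{V}$ fixed under the natural conjugation, so that a closed complex orbit intersected with $\mathcal{V}$ is a finite union of closed real $G_{\phi_i}$-orbits one of which contains $\mu_i$. A second, smaller point requiring care is that the conjugating element between $G_{\phi_1}^{\bc}$ and $G_{\phi_2}^{\bc}$ must simultaneously move $\mu_1^{\bc}$ onto the $G_{\phi_2}^{\bc}$-orbit of $\mu_2^{\bc}$; this follows because the fixed isomorphism $\n_1^{\bc}\cong\n_2^{\bc}$ sends $\mu_1^{\bc}$ to $g.\mu_2^{\bc}$ for some $g\in\GL(n,\bc)$, and after adjusting by an automorphism of $\n_2^{\bc}$ (Theorem~\ref{t:preE}(1)(b)) we may take the isomorphism to intertwine $\phi_1$ with $\phi_2$, so $g$ normalizes the relevant determinant conditions up to $H_0$-type scalings, exactly as in the final paragraph of the proof of Theorem~\ref{t:varm}. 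Once these descent facts are in place the argument is short; everything else is bookkeeping about eigenvalue types.
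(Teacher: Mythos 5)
Your proposal is correct and follows essentially the same route as the paper: reduce via Theorem~\ref{t:var} to closedness of the $G_{\phi}$-orbit, observe that the pre-Einstein derivation (unique up to automorphism, with rational eigenvalues) complexifies since $\Der(\n^{\bc})=(\Der(\n))^{\bc}$, and transfer closedness through the complexification. The one fact you flag as the main obstacle --- that the real orbit is Euclidean-closed if and only if the complex orbit $G_{\phi}^{\bc}.\mu$ is Zariski-closed --- is exactly what the paper invokes, citing \cite[Proposition 2.3]{BHC} and \cite[Corollary 5.3]{Bir} rather than a Richardson--Slodowy/Galois-descent argument, so your outline is complete once that reference is substituted.
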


Note that two real nilpotent algebras with isomorphic complexifications might be quite different. For instance,
two-step nilpotent algebras $\n_1$ and $\n_2$ defined by $[X_1,X_2]=Z_1,\, [X_3,X_4]=Z_2$,
and by $[X_1,X_3]=[X_2,X_4]=Z_1,\, [X_1,X_4]=[X_3,X_2]=Z_2$ respectively are isomorphic over $\bc$. However, the
algebra $\n_1$ is decomposable: it is a direct sum of two copies of the Heisenberg algebra $\h_3$, while $\n_2$ is
nonsingular (for any $X \in \n_2 \setminus \z$, where $\z$ is the center of $\n_2$, the map $\ad_X: \n_2 \to \z$ is
surjective \cite{Eb2}).

Theorem \ref{t:rvsc} can be useful when one knows the classification of
a family of nilpotent algebras only up to complex isomorphism
(see e.g. \cite{GT} or several lists of seven-dimensional nilpotent Lie algebras available in the literature).
Also, in the majority of the results of the Geometric Invariant Theory (which seems to be strongly present in the study
of Einstein nilradicals), the ground field is $\bc$ \cite{VP}.

\begin{proof}[Proof of Theorem~\ref{t:rvsc}]
We start with constructing a pre-Einstein derivation in the complex case, which we define as a semisimple derivation
$\phi$ satisfying \eqref{eq:pEtrace}. The proof follows the same lines as that of
assertion 1 of Theorem~\ref{t:preE} and shows that \emph{a pre-Einstein derivation always exists, is unique up to
conjugation by $\Aut(\n)$, and has all its eigenvalues rational}.

Let $\mathfrak{N}$ be a complex nilpotent Lie algebra, and let
$\Der (\mathfrak{N}) = \s \oplus \tg \oplus \n$ be the Levi-Mal'cev decomposition of $\Der (\mathfrak{N})$, where
$\tg \oplus \n$ is the radical of $\Der (\mathfrak{N})$, $\s$ is semisimple, $\n$ is the nilradical of $\tg \oplus \n$,
$\tg$ is an algebraic torus, and $[\tg, \s] = 0$.

The quadratic form $b$ on $\Der (\mathfrak{N})$
defined by $b(\psi_1, \psi_2) = \Tr (\psi_1  \psi_2)$ is
invariant and $b(\tg,\s)=b(\tg,\n)=0$. As $\Tr \psi = 0$ for any $\psi \in \s \oplus \n$, the derivation $\phi$ we are
after lies in $\tg$. The torus $\tg$ is fully reducible: in some basis for $\mathfrak{N}$, all the
elements from $\tg$ are given by diagonal matrices. As $\tg$ is algebraic, it is defined over $\mathbb{Q}$: there exists
a set of diagonal matrices with rational entries such that $\tg$ is their complex linear span. Explicitly, this follows
from the property, which is somewhat stronger then algebraicity.
Denote $D_{i}$ a matrix having $1$ as its $(i,i)$-th entry and zero elsewhere. For a nonzero $\psi \in \tg$, let
$\psi^\perp$ be the set of matrices $D=D_{i}+D_{j}-D_{k}$ such that $\Tr (\psi D) = 0$. Then $\tg$ contains all the
diagonal matrices $\psi'$ such that $\Tr (\psi' D) = 0$, for all $D \in \psi^\perp$ (the replicas of $\psi$).

The restriction of $b$ to $\tg$ is nondegenerate, and the matrix of $b$ is rational (and positive definite) with
respect to a rational basis for $\tg$. It follows that there exists a unique $\phi \in \tg$ such that \eqref{eq:pEtrace}
holds for all $\psi \in \tg$ (and hence for all $\psi \in \Der(\mathfrak{N})$), and the eigenvalues of $\phi$ are rational
numbers.
As in the proof of Theorem~\ref{t:preE}, the Mostow theorem \cite[Theorem 4.1]{Mos} implies that the pre-Einstein
derivation is unique up to conjugation by an automorphism of $\mathfrak{N}$.

Let now $\n=(\Rn, \mu)$ be a real nilpotent Lie algebra, with the complexification $\n^{\bc}=(\bc^n, \mu)$. Then
$\Der(\n^{\bc})=(\Der(\n))^{\bc}$, so the pre-Einstein derivation $\phi=\phi_{\n}$ also serves as a pre-Einstein
derivation for $\n^{\bc}$. It follows that the pre-Einstein derivations of two real Lie algebras whose complexifications
are isomorphic have the same eigenvalues.

By Theorem~\ref{t:var}, $\n$ is an Einstein nilradical if and only if the orbit $G_\phi.\mu$ is closed in $\mathcal{V}$.
Let $\g_\phi^\bc$ be the complexification of the Lie algebra $\g_\phi$ defined by \eqref{eq:subalgebra}, and let
$G^{\bc}_{\phi} \subset \SL(n, \bc)$ be the Lie group with the Lie algebra $\g_\phi^\bc$
(the group $G^{\bc}_{\phi}$ is defined by the right-hand side of \eqref{eq:tildegphi}, but over $\bc$).
Consider the orbit of $\mu$ in the space $\mathcal{V}^{\bc}=\wedge^2 (\bc^n)^* \otimes \bc^n$, the complexification of
$\mathcal{V}$, under the action of $G^{\bc}_{\phi}$. By the results of
\cite[Proposition 2.3]{BHC} and \cite[Corollary 5.3]{Bir}, the orbit $G_{\phi}.\mu$ is (Euclidean) closed in
$\mathcal{V}$ if and only if $G^{\bc}_{\phi}.\mu$ is Zariski-closed in $\mathcal{V}^{\bc}$.

It follows that two real algebras having isomorphic complexifications are or are not Einstein nilradicals
simultaneously. In the former case, the eigenvalue types are the same, as the spectra of the pre-Einstein derivations
are the same.
\end{proof}

\begin{remark} \label{rem:stable}
The proof shows that the property of a real nilpotent Lie algebra $\n$ to be an Einstein nilradical is, in fact, a
property of its complexification $\n^{\bc}$.
Namely, call a complex nilpotent Lie algebra $\mathfrak{N}=(\bc^n, \nu)$ with a pre-Einstein derivation $\phi$
\emph{stable}, if the orbit $G^{\bc}_{\phi}.\nu$ is Zariski-closed in $\mathcal{V}^{\bc}$.
Then $\n$ is an Einstein nilradical if and only if $\n^{\bc}$ is stable.
\end{remark}

Yet another application of Theorem~\ref{t:preE} and Theorem~\ref{t:var} is the following theorem.

\begin{theorem}\label{t:prod}
Let a nilpotent Lie algebra $\n$ be the direct sum of nilpotent Lie algebras $\n_1$ and $\n_2$. Then:

\begin{enumerate}[\rm (a)]
  \item
  If $\phi_i$ are pre-Einstein derivations for $\n_i$, then $\phi=\phi_1 \oplus \phi_2$ is
  a pre-Einstein derivation for $\n$.

  \item
  The algebra $\n$ is an Einstein nilradical if and only if both algebras $\n_1$ and $\n_2$ are.
\end{enumerate}

\end{theorem}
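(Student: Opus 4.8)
The plan is to handle (a) by a direct block computation, the ``if'' half of (b) by exhibiting an explicit rescaled product metric, and the ``only if'' half of (b) by the orbit--closedness criterion of Theorem~\ref{t:var} together with a dimension count. For (a), I would first record the block structure of $\Der(\n)$ for $\n=\n_1\oplus\n_2$: writing $\psi\in\Der(\n)$ in block form relative to this decomposition and substituting into the derivation identity pairs $X,Y$ taken both from $\n_1$, both from $\n_2$, and one from each (using $[\n_1,\n_2]=0$ and that $\n_1,\n_2$ are ideals) shows that the diagonal blocks $\psi_{11},\psi_{22}$ are derivations of $\n_1,\n_2$, while the off-diagonal blocks lie in $\mathrm{Hom}(\n_2/[\n_2,\n_2],\z(\n_1))$ and $\mathrm{Hom}(\n_1/[\n_1,\n_1],\z(\n_2))$; in particular $\Der(\n_1)\oplus\Der(\n_2)$ embeds block-diagonally in $\Der(\n)$. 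Since $\phi=\phi_1\oplus\phi_2$ is block diagonal, $\Tr\psi$ and $\Tr(\phi\psi)$ depend only on $\psi_{11},\psi_{22}$, so $\Tr(\phi\psi)=\Tr(\phi_1\psi_{11})+\Tr(\phi_2\psi_{22})=\Tr\psi_{11}+\Tr\psi_{22}=\Tr\psi$ by the pre-Einstein property of each $\phi_i$; as $\phi$ is evidently semisimple with real eigenvalues, it is pre-Einstein for $\n$.

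For the ``if'' direction of (b), suppose $\ip_i$ is a nilsoliton inner product on $\n_i$ with $\ric_{\n_i}=c_i\,\id_{\n_i}+\Phi_i$, $c_i<0$, $\Phi_i\in\Der(\n_i)$. I would put on $\n=\n_1\oplus\n_2$ the orthogonal product metric $\ip=\ip_1\oplus(c_2/c_1)\ip_2$. Using \eqref{eq:riccinilexplicit} in a basis that is the union of orthonormal bases of the two factors, the Ricci operator of a product metric on a direct sum of mutually commuting ideals is the block-diagonal sum of the factors' Ricci operators; rescaling the second factor by $c_2/c_1>0$ multiplies its Ricci operator by $c_1/c_2$, so
\[
\ric_\n=(c_1\,\id_{\n_1}+\Phi_1)\oplus(c_1\,\id_{\n_2}+(c_1/c_2)\Phi_2)=c_1\,\id_\n+\bigl(\Phi_1\oplus(c_1/c_2)\Phi_2\bigr).
\]
Since $\Phi_1\oplus(c_1/c_2)\Phi_2\in\Der(\n)$ and $c_1<0$, equation \eqref{eq:ricn} holds and $\n$ is an Einstein nilradical.

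For the ``only if'' direction, let $\phi=\phi_1\oplus\phi_2$, pre-Einstein for $\n$ by (a), and write $\mu=\mu_1\oplus\mu_2$. Checking the defining conditions \eqref{eq:tildegphi}--\eqref{eq:Ggphi} (matching the auxiliary integers $a_j$ across the common eigenvalues of $\phi_1,\phi_2$, which only changes the exponents in the determinant conditions) shows $G_{\phi_1}\times G_{\phi_2}\subseteq G_\phi$ via the block-diagonal embedding. Assume $\n$ is an Einstein nilradical but, say, $\n_1$ is not; then $G_{\phi_1}.\mu_1$ is not closed, so by Theorem~\ref{t:varm} there is $\mu_1^0\in\overline{G_{\phi_1}.\mu_1}$ with $\n_1^0:=(\br^{n_1},\mu_1^0)$ \emph{not isomorphic} to $\n_1$; picking $g_k\in G_{\phi_1}$ with $g_k.\mu_1\to\mu_1^0$, the elements $(g_k,\id_{\n_2})\in G_\phi$ satisfy $(g_k,\id_{\n_2}).\mu=(g_k.\mu_1)\oplus\mu_2\to\mu_1^0\oplus\mu_2=:\mu^0$. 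By Theorem~\ref{t:var} the orbit $G_\phi.\mu$ is closed, so $\mu^0\in G_\phi.\mu$ and hence $\n_1^0\oplus\n_2\cong\n_1\oplus\n_2$. Now I would reach a contradiction by a dimension count: since $\n_1^0\not\cong\n_1$ and $\mu_1^0$ lies in the boundary of the $\GL(n_1)$-orbit of $\mu_1$ (being a degeneration outside its isomorphism class), that boundary orbit has strictly smaller dimension, so $\dim\Der(\n_1^0)>\dim\Der(\n_1)$, while $\dim[\n_1^0,\n_1^0]\le\dim[\n_1,\n_1]$ and $\dim\z(\n_1^0)\ge\dim\z(\n_1)$; feeding these into the direct-sum formula
\[
\dim\Der(\n_1\oplus\n_2)=\dim\Der(\n_1)+\dim\Der(\n_2)+\dim(\n_1/[\n_1,\n_1])\dim\z(\n_2)+\dim(\n_2/[\n_2,\n_2])\dim\z(\n_1)
\]
coming from the block description above gives $\dim\Der(\n_1^0\oplus\n_2)>\dim\Der(\n_1\oplus\n_2)$, contradicting the isomorphism. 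Hence $\n_1$, and by symmetry $\n_2$, are Einstein nilradicals. The routine parts are (a) and the ``if'' direction; the real work is this cancellation-type step -- that a direct sum cannot be an Einstein nilradical while one summand properly degenerates inside it -- which the $\dim\Der$ count resolves, together with the (essentially bookkeeping) verification that $G_{\phi_1}\times G_{\phi_2}$ sits inside $G_\phi$.
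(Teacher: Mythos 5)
Your argument is correct, and it reproduces the paper's proof of (a) and the degeneration setup for the ``only if'' half of (b) almost verbatim; where it genuinely diverges is in the two closing steps of (b). For the ``if'' direction the paper simply cites \cite[Theorem 4]{Pay}, whereas you verify \eqref{eq:ricn} directly for the rescaled orthogonal product $\ip_1\oplus(c_2/c_1)\ip_2$, using that the Ricci operator of a metric direct sum of commuting ideals is block diagonal and that scaling a factor's metric by $\la>0$ multiplies its Ricci operator by $\la^{-1}$; this makes the step self-contained at the cost of a short computation. For the ``only if'' direction, both proofs produce a limit $\mu_1^0\oplus\mu_2$ in the closed orbit $G_\phi.\mu$ with $\n_1^0\not\simeq\n_1$ and must then rule out $\n_1^0\oplus\n_2\simeq\n_1\oplus\n_2$. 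The paper does this by invoking the uniqueness (up to permutation and isomorphism) of the decomposition of a Lie algebra into indecomposable summands, via the central-automorphism and \cite[Theorem~2.8(e)]{RWZ} argument. You instead combine the block formula
\[
\dim\Der(\n_1\oplus\n_2)=\dim\Der(\n_1)+\dim\Der(\n_2)+\dim(\n_1/[\n_1,\n_1])\dim\z(\n_2)+\dim(\n_2/[\n_2,\n_2])\dim\z(\n_1)
\]
(which is exactly the paper's decomposition $\Der(\n)=\oplus_{i,j}\dg_{ij}$ made quantitative) with the standard semicontinuity facts for degenerations: $\dim\Der$ strictly increases on the orbit boundary (since the boundary of the $\GL(n_1)$-orbit of $\mu_1$ consists of orbits of strictly smaller dimension, and the stabilizer of $\mu_1$ has Lie algebra $\Der(\n_1)$), while $\dim[\cdot,\cdot]$ can only drop and $\dim\z$ can only grow. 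Every term in the formula is then $\ge$ its counterpart and the first is strictly larger, so the two direct sums cannot be isomorphic. This is a clean, elementary replacement for the Krull--Schmidt-type input: it avoids the uniqueness-of-decomposition machinery entirely, at the price of quoting the (standard, but unproved here) facts that real algebraic orbits are open in their closures and that the relevant invariants are semicontinuous along degenerations. Both routes are sound; the paper's is more structural, yours is more computational and arguably easier to check.
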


As it follows from \cite[Theorem 4]{Pay}, if both algebras $\n_1$ and $\n_2$ are Einstein nilradicals,
a nilsoliton inner product on $\n$ can be taken as the orthogonal sum of (appropriately scaled) nilsoliton inner
products on the $\n_i$'s. Geometrically this says that the Riemannian product of two Einstein solvmanifolds of
the same Ricci curvature is again an Einstein solvmanifold.

In the case when one of the summands is abelian, assertion (b) follows from \cite[Proposition~3.3]{La2}.
Note also that assertion (b) is very far from being true for the semidirect sum: even when both summands are
abelian, the resulting algebra $\n$ is two-step nilpotent and can easily be not an Einstein nilradical (see
Section~\ref{s:2step}).

\begin{proof}
In the proof, we use $\oplus$ in several different meanings: as a direct sum of linear spaces (even when the spaces
are Lie algebras), as the direct sum of operators, and as the direct sum of Lie brackets (in the obvious sense). We
use $\dotplus$ for the direct sum of Lie algebras.

(a) It is not difficult to see that the algebra $\Der (\n)$ admits the following splitting into four subspaces:
$\Der (\n) = \oplus_{i,j=1}^2 \dg_{ij}$, where $\dg_{ii}, \; i=1,2$, is the space of all $\psi \in \End(\n)$ such that
$\psi(\n_i) \subset \n_i$, $\psi_{|\n_i} \in \Der (\n_i), \; \psi_{|\n_j} = 0$, for $j \ne i$; and
$\dg_{ij}, \; i \ne j$, is the space of all $\psi \in \End(\n)$ such that
$\psi_{|\n_j \oplus [\n_i, \n_i]} = 0$ and $\psi(\n_i)$ lies in the center of $\n_j$.

Let now $\phi \in \dg_{11} \oplus \dg_{22}$ be the derivation of $\n$ such that $\phi_{|\n_i} = \phi_i, \; i=1,2$.
Clearly, $\phi$ is semisimple and real. Moreover, equation \eqref{eq:pEtrace} holds for any
$\psi \in \dg_{11} \oplus \dg_{22}$, as each of the $\phi_i$'s is pre-Einstein, and for any
$\psi \in \dg_{12} \oplus \dg_{21}$, as both sides vanish.

(b) The ``if" part follows directly from \cite[Theorem 4]{Pay}.
To prove the ``only if" part, choose and fix the pre-Einstein derivations
$\phi_1, \phi_2$, and $\phi=\phi_1 \oplus \phi_2$ for $\n_1, \n_2$, and $\n$ respectively and consider the algebras
$\g_{\phi_i} \subset \slg(\n_i), \; i=1,2$, and $\g_{\phi} \subset \slg(\n)$, as in \eqref{eq:subalgebra}. Note that
$\g_{\phi} \supset \g_{\phi_1} \oplus \g_{\phi_2}$. Let $\n_i=(\br^{n_i}, \mu_i), \; i=1,2$, then
$\n=(\br^{n_1+n_2}, \mu)$, with $\mu=\mu_1 \oplus \mu_2$.

Suppose that $\n$ is an Einstein nilradical, but $\n_1$ is not.
By assertion 2(a) of Theorem~\ref{t:varm}, there exists
$A_1 \in \g_{\phi_1} \subset \slg(\n_1)$ such that the limit $\mu_1'=\lim_{t\to\infty}(\exp(tA_1).\mu_1)$ exists, and
the algebra $\n_1'=(\br^{n_1}, \mu_1')$ is not isomorphic to $\n_1$. Then for $A=A_1 \oplus 0_{|\n_2} \in \g_{\phi}$,
we have
$\lim_{t\to\infty}(\exp(tA).\mu)=\mu_1'\oplus\mu_2$. As $\n$ is an Einstein nilradical, the algebra
$\n'=\n_1' \dotplus \n_2$ must be isomorphic to the algebra $\n=\n_1 \dotplus \n_2$ by (ii) of Theorem~\ref{t:var}.

The claim now follows from the purely Lie-algebraic fact that if the algebras $\n_1$ and $\n_1'$ are not isomorphic,
then so are $\n_1 \dotplus \n_2$ and $\n_1' \dotplus \n_2$, for any $\n_2$, which in turn follows from the uniqueness
of a decomposition of a Lie algebra into the direct sum of undecomposable ones, up to permutation and isomorphism.
To decompose an arbitrary (in particular, for a nilpotent) Lie algebra $\n$, one can use the following approach.
First, split off a direct abelian summand $\ag$, if it exists: $\n= \widehat{\n} \dotplus \ag$, where $\ag$ is a
linear complement to $[\n,\n]$ in the center $\z(\n)$, and $\widehat{\n} \supset [\n,\n]$ is a linear complement to
$\ag$ in $\n$.
This decomposition is not in general unique, but is unique up to a central automorphism of $\n$, namely up
to $h \in \Aut(\n)$ such that $(h-\id)(\n) \subset \z(\n)$ \cite[Theorem~2.4, Eq.~(2.28)]{RWZ}.
Next, when a particular $\widehat{\n}$ is chosen (note that $\z(\widehat{\n}) \subset [\widehat{\n},\widehat{\n}]$), the
further decomposition is unique up to permutation by \cite[Theorem~2.8(e)]{RWZ}. It follows that a decomposition of a Lie
algebra into the direct sum of undecomposable Lie algebras is unique, up to isomorphism and permutation, so the algebras
$\n_1 \dotplus \n_2$ and $\n_1' \dotplus \n_2$, with $\n_1 \not\simeq \n_1'$, are nonisomorphic.
\end{proof}

\section{Two-step Einstein nilradicals}
\label{s:2step}

In this section, the technique developed in the preceding sections is applied to the two-step nilpotent Lie algebras.
We prove Theorem~\ref{t:twostepopen} and also consider some exceptional cases.

We start with some preliminary facts, mostly following \cite{Eb2}.
A two-step nilpotent Lie algebra $\n$ of dimension $p + q$, is said to be \emph{of type} $(p,q)$, if its derived
algebra $\m = [\n, \n]$ has dimension $p$. Clearly, $\m \subset \z(\n)$, the center of $\n$, and
$1 \le p \le D:=\frac12 q (q-1)$.

Choose a subspace $\mathfrak{b}$ complementary to $\m$ in $\n$ and two bases: $\{X_i\}$ for $\mathfrak{b}$
and $\{Z_k\}$ for $\m$.
The Lie bracket on $\n$ defines (and is defined by) a $p$-tuple of skew-symmetric $q \times q$ matrices
$J_1, \ldots, J_p$ such that $[X_i, X_j] = \sum_{\al=1}^p (J_\al)_{ij} Z_\al$. The space of such $p$-tuples is
$\mathcal{V}(p,q)=(\wedge^2 \br^q)^p$. Note that the $J_\al$'s must be linearly independent, as $\m = [\n, \n]$,
so the points of $\mathcal{V}(p,q)$ corresponding to the algebras of type $(p,q)$ form a subset
$\mathcal{V}^0(p,q) \subset \mathcal{V}(p,q)$, which is the complement to a real algebraic subset. The spaces
$\mathcal{V}(p,q)$ and $\mathcal{V}^0(p,q)$ are acted upon by the group $\GL(q) \times \GL(p)$ (change of bases):
for $x = (J_1, \ldots, J_p) \in \mathcal{V}(p,q)$ and $(M, T) \in \GL(q) \times \GL(p)$,
$(M, T).x= (\tilde J_1, \ldots, \tilde J_p)$, with $\tilde J_\al=\sum_{\beta=1}^p (T^{-1})_{\beta\al} M J_\beta M^t$
(note that instead of fixing the basis and deforming the Lie bracket, as in the action
$g.\mu(X,Y)=g\mu(g^{-1}X,g^{-1}Y)$, we now keep the Lie bracket fixed and change the basis).
Clearly, two points of $\mathcal{V}^0(p,q)$ lying on the same $\GL(q) \times \GL(p)$-orbit define isomorphic algebras.
The converse is also true, so that the space $\mathcal{X}(p, q)$ of the isomorphism classes of two-step nilpotent
Lie algebras of type $(p, q)$ is the quotient space $\mathcal{V}^0(p,q)/(\GL(q) \times \GL(p))$.
The space $\mathcal{X}(p, q)$ is compact, but in general is non-Hausdorff.

By Proposition A of \cite[Section 5.4d]{Eb1},
when $q \ge 6$ and $2 < p < D-2$, or when $(p,q) = (5,5)$, the space $\mathcal{V}(p,q)$ contains no
open $\GL(q) \times \GL(p)$-orbits, that is, no two-step nilpotent algebras of type
$(p,q)$ are locally rigid (an open orbit of the action of $\SL(q)$ on the Grassmannian
$G(p, \wedge^2 \br^q)$ considered in \cite{Eb1} occurs exactly when the action of $\GL(q) \times \GL(p)$ on
$\mathcal{V}(p,q)$ has an open orbit).

It will be more convenient to consider the action of $\SL(q) \times \SL(p)$, rather than $\GL(q) \times \GL(p)$.
The $\SL(q) \times \SL(p)$-orbits distinguish the isomorphism classes up to scaling, which is
easy to control.

The splitting $\n = \mathfrak{b} \oplus \m$ of a two-step nilpotent Lie algebra $\n$ is a gradation,
which corresponds to the
\emph{canonical derivation} $\Psi$ defined by $\Psi (X + Z) = X + 2 Z$, for any $X \in \mathfrak{b},\; Z \in \m$.
If a pre-Einstein derivation $\phi$ for $\n$ is proportional to $\Psi$,
then the group $G_\phi$ from Theorem~\ref{t:var} is precisely $\SL(q) \times \SL(p)$ (see \eqref{eq:Ggphi}).
If, in addition, $\n$ is an Einstein nilradical, then it has the eigenvalue type $(1,2; q, p)$.
Thus Theorem~\ref{t:var} implies the following:

\begin{proposition}[{\cite[Proposition 9.1]{La3}}] \label{p:type12}
A two-step nilpotent Lie algebra $\n$ of type $(p,q)$ corresponding to a point $x \in \mathcal{V}(p,q)$ is an Einstein
nilradical of the eigenvalue type $(1,2; q, p)$ if and only if the orbit
$(\SL(q) \times \SL(p)).x \subset \mathcal{V}(p,q)$ is closed.
\end{proposition}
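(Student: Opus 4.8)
The plan is to derive Proposition~\ref{p:type12} as a direct corollary of Theorem~\ref{t:var} once we have identified the pre-Einstein derivation and the group $G_\phi$ in the two-step case. First I would compute a pre-Einstein derivation for a two-step nilpotent Lie algebra $\n$ of type $(p,q)$ and show it is proportional to the canonical derivation $\Psi$. The canonical derivation $\Psi=\id_{\mathfrak b}\oplus 2\,\id_{\m}$ is a semisimple derivation with eigenvalue $1$ on $\mathfrak b$ (multiplicity $q$) and eigenvalue $2$ on $\m$ (multiplicity $p$). To see that $\Psi$ (suitably normalized) satisfies \eqref{eq:pEtrace}, recall that every derivation $\psi$ of $\n$ preserves the characteristic ideals $\m=[\n,\n]$ and, modulo $\m$, acts on $\n/\m\cong\mathfrak b$; moreover $\psi(\mathfrak b)\subset \mathfrak b\oplus\m$ while $\psi(\m)\subset\m$. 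Writing $\psi$ in block form with respect to $\n=\mathfrak b\oplus\m$, one finds $\Tr(\Psi\psi)=\Tr(\psi_{|\mathfrak b/\m})+2\Tr(\psi_{|\m})$, and a short argument using the two-step relation shows that a scalar multiple $\phi=c\Psi$ can be chosen so that $\Tr(\phi\psi)=\Tr\psi$ for all $\psi\in\Der(\n)$; by uniqueness (assertion~1(b) of Theorem~\ref{t:preE}) this $\phi$ is \emph{the} pre-Einstein derivation. (In the degenerate situations this amounts to noting that $\m$ and $\mathfrak b$ receive equal ``weight per dimension'' forced by the linear constraints.)

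Second, with $\phi$ proportional to $\Psi$, I would identify $G_\phi$ explicitly. The eigenspaces of $\phi$ are exactly $\n_1=\mathfrak b$ and $\n_2=\m$, so $\prod_j\GL(\n_j)=\GL(q)\times\GL(p)$, and the exponents $a_j$ in \eqref{eq:tildegphi} are proportional to $1$ and $2$. The two determinant constraints $\det g_1\cdot\det g_2=1$ and $\det g_1\cdot(\det g_2)^2=1$ together force $\det g_1=\det g_2=1$; passing to the identity component as in \eqref{eq:Ggphi} gives $G_\phi=\SL(q)\times\SL(p)$. Here one must be slightly careful to match the two group actions: Theorem~\ref{t:var} uses $g.\mu(X,Y)=g\mu(g^{-1}X,g^{-1}Y)$ on $\mathcal V=\wedge^2(\mathbb R^n)^*\otimes\mathbb R^n$, whereas Section~\ref{s:2step} records the isomorphism action of $\GL(q)\times\GL(p)$ on $\mathcal V(p,q)$ via $(M,T).x$. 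One checks that under the natural identification $\mathcal V(p,q)\cong$ (the two-step locus in $\mathcal V$) these two actions of $\SL(q)\times\SL(p)$ agree up to the obvious change between ``deforming the bracket'' and ``changing the basis'' — precisely the remark made in the text around the definition of $(M,T).x$ — so an orbit is closed in one picture iff it is closed in the other.

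Third, I would invoke Theorem~\ref{t:var}: $\n$ is an Einstein nilradical iff the orbit $G_\phi.\mu$ is closed in $\mathcal V$. Combined with the identification $G_\phi=\SL(q)\times\SL(p)$ and the translation between the two actions, this says $\n$ is an Einstein nilradical iff $(\SL(q)\times\SL(p)).x\subset\mathcal V(p,q)$ is closed. It remains to pin down the eigenvalue type. If $\n$ is an Einstein nilradical, its Einstein derivation is a positive multiple of $\phi=c\Psi$ by assertion~2 of Theorem~\ref{t:preE}, hence has exactly two eigenvalues in ratio $1:2$ with multiplicities $q$ and $p$; by definition this is eigenvalue type $(1,2;q,p)$. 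Conversely, if we only assume the orbit is closed, the same chain of implications produces an Einstein nilradical structure whose Einstein derivation is $\propto\phi\propto\Psi$, again of type $(1,2;q,p)$. This gives the stated equivalence.

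The main obstacle is the bookkeeping in the second step: verifying that $\phi$ really is proportional to $\Psi$ in \emph{all} cases (not just the generic one where $\Der(\n)$ is large), and reconciling the ``bracket-deformation'' action of $G_\phi$ from Theorem~\ref{t:var} with the ``basis-change'' action of $\GL(q)\times\GL(p)$ on $\mathcal V(p,q)$ so that the two notions of closed orbit literally coincide. The normalization $\phi=c\Psi$ itself is not automatic — it uses that the linear relations coming from $\Tr(\phi\psi)=\Tr\psi$ restricted to the (always present) scaling-type derivations $X\mapsto aX,\ Z\mapsto bZ$ of a two-step algebra force $a$ and $b$ into a fixed ratio — but once this is in hand the rest is a direct citation of Theorem~\ref{t:var} and Theorem~\ref{t:preE}. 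Since the statement is attributed to \cite[Proposition 9.1]{La3}, one may alternatively shortcut the whole argument by simply noting that it is the specialization of Theorem~\ref{t:var} to $\phi=c\Psi$, $G_\phi=\SL(q)\times\SL(p)$, and citing Lauret; but the self-contained derivation above is the more informative route.
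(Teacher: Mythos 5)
Your step 2 (the determinant computation giving $G_\phi=\SL(q)\times\SL(p)$ once $\phi\propto\Psi$) and the reconciliation of the two group actions are fine and match the paper. The problem is step 1: the claim that the pre-Einstein derivation of an \emph{arbitrary} two-step nilpotent Lie algebra of type $(p,q)$ is proportional to the canonical derivation $\Psi$ is false, and no ``short argument using the two-step relation'' can establish it. Already for $\h_3\oplus\br$ (type $(1,3)$) the pre-Einstein derivation is $\frac23\diag(1,1,2)\oplus\id$, with three distinct eigenvalues; the paper's own Example in Section~\ref{s:thpE} exhibits a type $(2,10)$ algebra whose pre-Einstein derivation is $\frac{1}{55}\diag(43,42,\dots,85,86)$; and the algebras in Tables~\ref{table:nobasis} and \ref{table:nobasisdual} are two-step Einstein nilradicals with ``quite exotic eigenvalue types''. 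Concretely, the identity $\Tr(c\Psi\,\psi)=\Tr\psi$ for all $\psi\in\Der(\n)$ is one linear condition per derivation, and already the two scaling derivations of $\h_3\oplus\br$ (identity on the abelian summand versus $\Psi$ itself) force incompatible values of $c$. This is exactly why the Proposition carries the restriction ``of eigenvalue type $(1,2;q,p)$'' on the left-hand side; your argument, if it worked, would delete that restriction and contradict the tables.

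The gap propagates into your third step. For the ``only if'' direction your reasoning is salvageable: \emph{assuming} $\n$ is an Einstein nilradical of eigenvalue type $(1,2;q,p)$, its Einstein derivation has eigenspace decomposition $\mathfrak{b}\oplus\m$ with eigenvalues $1,2$ (the eigenvalue-$2$ space must be $\m=[\n,\n]$ by dimension count), so by assertion~2 of Theorem~\ref{t:preE} the pre-Einstein derivation \emph{is} proportional to $\Psi$, whence $G_\phi=\SL(q)\times\SL(p)$ and Theorem~\ref{t:var} gives closedness. But for the ``if'' direction you cannot invoke Theorem~\ref{t:var}, because for the algebra at hand $G_\phi$ may simply not be $\SL(q)\times\SL(p)$. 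The correct route is the one built into assertions 2--3 of Lemma~\ref{l:tech}, which are deliberately stated for a semisimple derivation $\psi$ that need \emph{not} be pre-Einstein: take $\psi=\Psi$, so $G_\psi=\SL(q)\times\SL(p)$; closedness of the orbit gives (via Richardson--Slodowy, as in the proof of Theorem~\ref{t:varm}) a minimum and hence a critical point of $f$ on $G_\psi$; then 3(a) says $\n$ is an Einstein nilradical and 3(b) says $\Psi$ is proportional to a pre-Einstein derivation of $\n$ (here $\n$ is nonabelian since $p\geq1$), which forces the eigenvalue type to be $(1,2;q,p)$. So the statement does follow from the paper's machinery, but through Lemma~\ref{l:tech} with a not-necessarily-pre-Einstein $\psi$, not through the blanket identification $\phi\propto\Psi$.
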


Note that if a pre-Einstein derivation of a nilpotent Lie algebra $\n$ is proportional to the one having only eigenvalues
$1$ and $2$, then $\n$ is automatically two-step nilpotent. Theorem~\ref{t:twostepopen} says that a typical algebra with
such pre-Einstein derivation is an Einstein nilradical.

Let $\n$ be a two-step nilpotent Lie algebra of type $(p,q)$, with $1 \le p < D$, defined by a point
$x = (J_1, \ldots, J_p) \in \mathcal{V}^0(p,q)$. Choose an arbitrary basis $J'_\al, \; \al= 1, \ldots, D-p$, in the
orthogonal complement to the subspace $\Span(J_1, \ldots, J_p) \subset \wedge^2 \br^q$ with respect to the inner product
$Q(K_1, K_2)= -\Tr (K_1 K_2)$ on $\wedge^2 \br^q$. The point $x'=(J_1', \ldots, J_{D-p}') \in \mathcal{V}^0(D-p,q)$
defines a two-step nilpotent Lie algebra $\n^*$ of type $(D-p,q)$, which is called the \emph{dual} to $\n$. It is
easy to check that the isomorphism class of $\n^*$ is well-defined (depends only on the isomorphism class of $\n$).

\begin{proof}[Proof of Theorem~\ref{t:twostepopen}]
Let a pair $(p,q)$ with $1 \le p \le q(q-1)/2$,
be such that the stabilizer in general position (the s.g.p.) of the group
$G^{\bc}=\SL(q,\bc) \times \SL(p,\bc)$ acting on the space
$\mathcal{V}^{\bc}(p,q)=(\wedge^2 \bc^q)^p$ is reductive. This means that there exists a reductive complex Lie algebra
$\h$ and a nonempty $G^{\bc}$-invariant Zariski-open subset $U' \subset \mathcal{V}^{\bc}(p,q)$ such that for any point
$x \in U'$ the Lie algebra of the stabilizer $G^{\bc}_x$ is isomorphic to $\h$ \cite[\S 7]{VP}.

By \cite{Ela}, in the most cases, the s.g.p. of the action of $G^{\bc}$ on $\mathcal{V}^{\bc}(p,q)$ is finite, that is,
$\h=0$ (as it is suggested by the dimension count). The cases when it is not are listed in Table 6 of \cite{Ela}.
Examining that table we see that the s.g.p. is reductive unless $(p,q) = (2, 2k+1)$.

By the Popov criterion \cite{Pop}, in all the cases when the s.g.p. is reductive, the action is stable, that is, there
exists a nonempty Zariski-open subset $U'' \subset \mathcal{V}^{\bc}(p,q)$, which is a union of
closed $G^{\bc}$-orbits. Let $U$ be the set of real points of $U'' \cap U'$. Then $U$ is
a $\SL(q) \times \SL(p)$-invariant semialgebraic subset of
$\mathcal{V}(p,q)=(\wedge^2 \br^q)^p$ and is open and dense in the Euclidean topology in $\mathcal{V}(p,q)$.

It now follows from Remark~\ref{rem:stable} and Proposition~\ref{p:type12} that for every point $x \in U, \; x \ne 0$,
the two-step nilpotent Lie algebra corresponding to $x$ is an Einstein nilradical.
Note that every such $x$ lies in $\mathcal{V}^0(p,q)$, hence defining a two-step nilpotent Lie algebra $\n$
precisely of type $(p,q)$ (for if $x =(J_1, \ldots, J_p) \in U$ and the $J_\al$'s are linearly dependent, then the
closure of the orbit $(\SL(q) \times \SL(p)).x$ contains the origin of $\mathcal{V}(p,q)$).
Hence the eigenvalue type of the Einstein nilradical $\n$ is $(1,2;q,p)$.
\end{proof}

The proof shows that a generic point of $\mathcal{V}(p,q)$ defines an Einstein nilradical with the eigenvalue
type $(1,2;q,p)$ in all the cases except for $(p,q)=(2, 2k+1)$ (in fact, there are no two-step
Einstein nilradicals of type $(1,2;2k+1,2)$ at all, see \cite[Remark~3]{Ni1}). In Theorem~\ref{t:twostepopen}, we narrow
the dimension range
to exclude those cases when some algebras of type $(p,q)$ have open orbits in $\mathcal{V}(p,q)$ (and additionally,
the cases  $(p,q)=(2, 2k),\; k >3$). The remaining cases give a reasonable notion of being
typical not only in the linear space $\mathcal{V}(p,q)$, but also in the non-Hausdorff space $\mathcal{X}(p,q)$ of
isomorphism classes of two-step nilpotent algebras of type $(p,q)$. Note that in general, the condition of typicality
of a nilpotent Lie algebra could be hardly nicely defined (see \cite{Luk}). On the other hand, taking the categorical
quotient $\mathcal{V}(p,q)/\!/(\SL(q) \times \SL(p))$ is somewhat tautological in view of the proof of
Theorem~\ref{t:twostepopen}.

Theorem~\ref{t:twostepopen} omits two-step nilpotent Lie algebras $\n$ with the following $(p,q)$:
\begin{itemize}
    \item $p=1$. Any such $\n$ is the direct sum of a Heisenberg algebra and an abelian ideal and is an
    Einstein nilradical (the corresponding solvmanifold can be taken as the product of a real and a complex hyperbolic
    space).
    \item $p=D$, a free two-step nilpotent algebra; $\n$ is an Einstein nilradical by
    \cite[Proposition 2.9]{GK}.
    \item $p=D-1$; any such algebra is an Einstein nilradical by \cite[Lemma 6]{Ni1}.
    \item $p=2, \; p=D-2$. These algebras can be completely classified using the Kronecker theory of matrix pencils;
    the approach suggested at the end of Section~\ref{s:thpE} can be used to find Einstein nilradicals among them.
    \item $q \le 5, \; (p,q) \ne (5,5)$. Using Theorem~\ref{t:nice} and the classification from \cite{GT} we find below
    all the Einstein nilradicals among these algebras.
\end{itemize}

First of all, by \cite[Theorem 3.1]{Wil} and \cite[Theorem 5.1]{La2}, any nilpotent
Lie algebra of dimension six or lower is an Einstein nilradical (no longer true in dimension $7$). Secondly, any
two-step nilpotent algebra with $p=1, D$, or $D-1$, is an Einstein nilradical. This leaves out the following list of
pairs $(p,q)$:
\begin{equation*}
(p,q)= (3,4), \; (4,4), \; (2,5), \; (3,5), \; (4,5), \; (6,5), \; (7,5), \; (8,5).
\end{equation*}
According to the classification
in \cite[Table 2]{GT}, all the algebras with $(p,q)= (3,4), \; (4,4), \; (2,5), \; (3,5)$, and $(4,5)$ over $\bc$ fall
into a finite number of classes, each of which is defined over $\br$. The same is true for the remaining three
cases $(p,q)= (6,5), \; (7,5), \; (8,5)$, which give the algebras dual to the algebras of types
$(p,q)= (4,5), \; (3,5)$, and $(2,5)$ respectively. Note that the dual to an Einstein nilradical is not necessarily
an Einstein nilradical.

By Theorem~\ref{t:rvsc}, it suffices to consider the algebras in \cite[Table 2]{GT} and their duals as the
\emph{real} Lie algebras.
The majority of them (all five of type $(3,4)$, all three of type $(4,4)$, all five of type $(2,5)$,
$15$ out of $17$ of type $(3,5)$, and $31$ out of $38$ of type $(4,5)$) have a nice basis, which is the basis given in
the table. Moreover, as $q \le 5$, the two-step nilpotent Lie algebra dual to a one having a nice basis also has a nice
basis, which easily follows from Definition~\ref{d:nice}.

\begin{table}[htbp]
\renewcommand{\arraystretch}{1.2}
\begin{tabular}{|p{.4cm}|p{4.7cm}|p{9.42cm}|}
  \hline
  $\#$ & \centering Relations & \centering Orthonormal basis \tabularnewline
  \hline
  \textbf{26} & 133, 152, 233, 244, 251, 341 & $\frac{-1}{\sqrt3}(2X_1-X_2),\frac{\sqrt2}{\sqrt3}X_2, X_3,X_4,X_5,
  Z_3,Z_4,\frac{-\sqrt{3}}{\sqrt{10}}(Z_1+2Z_2),\frac{\sqrt3}{\sqrt2}Z_2$ \\
  \textbf{28} & \hskip-.1cm 134, 143, 152, 233, 242, 251, 342 & $2\sqrt{3}X_1,-2X_2+X_3, \sqrt{6}X_3,\sqrt{6}X_4,X_5,
  Z_4,Z_3,\frac{1}{\sqrt{2}}Z_2,\frac{1}{\sqrt{30}}Z_1$\\
  \textbf{44} & 124, 143, 152, 232, 242, 351 & $\sqrt{6}X_1,\sqrt{6}X_2, -2X_3+X_4,\sqrt{6}X_4,X_5,
  Z_4,Z_3,Z_2,\frac{1}{\sqrt{15}}Z_1$\\
  \textbf{45} & 123, 142, 151, 232, 243, 344 & $\frac{1}{\sqrt6}(2X_1+X_4),2X_2, X_3,X_4,\sqrt{\frac52}X_5,
  Z_4,2Z_3,2Z_2,Z_1$ \\
  \textbf{55} & 124, 132, 142, 243, 351 & $X_1,X_2, \frac{1}{\sqrt6}(-2X_3+X_4),X_4,X_5,
  Z_4,Z_3,Z_2,\sqrt{\frac25}Z_1$ \\
  \textbf{60} & 124, 132, 143, 232, 251, 341 & $\frac{1}{\sqrt2}(-2X_1+X_2),\sqrt{\frac32}X_2,X_3,X_4,X_5,
  \sqrt{3}Z_4,Z_3,Z_2,Z_1$ \\
  \textbf{66} & 124, 131, 153, 231, 242 & $-2X_1+X_2,\sqrt{3}X_2,X_3,X_4,X_5, \sqrt{12}Z_4,Z_3,Z_2,Z_1$ \\
  \textbf{72} & 132, 143, 232, 251, 341 & $X_1+X_2, \sqrt{3} (X_1-X_2), X_3,X_4,X_5,Z_3,Z_1,Z_2$ \\
  \textbf{78} & 131, 153, 231, 242 & $X_1+X_2, \sqrt{3} (X_1-X_2), X_3,X_4,X_5, Z_1,Z_2,Z_3$ \\
  \hline
\end{tabular}
\vskip .25cm
\caption{Two-step nilpotent Lie algebras of types $(3,5)$ and $(4,5)$ having no nice basis}\label{table:nobasis}
\end{table}

Therefore, for all but two algebras of type $(3,5)$, seven algebras of type $(4,5)$ and their duals (two of type
$(7,5)$ and seven of type $(6,5)$), the question of whether the algebra is an Einstein nilradical is completely answered
by Theorem~\ref{t:nice}. It turns out that in all these cases the matrix $Y$ has the maximal rank $m$, so that the
equation $YY^t \al = [1]_m$ from (ii) of Theorem~\ref{t:nice} has a unique solution $\al$. A direct
computation shows that the only cases when the algebra fails to be an Einstein nilradical are the following:
three algebras of type $(7, 5)$ dual to the algebras $\mathbf{75},\mathbf{87}$ and $\mathbf{102}$, and six algebras
of type $(6, 5)$ dual to the algebras $\mathbf{21},\mathbf{36},\mathbf{41},\mathbf{50},\mathbf{52}$ and $\mathbf{59}$
from \cite[Table 2]{GT}.
Perhaps, the most interesting of them is algebra $\mathbf{102}$: viewed as an algebra of type $(3,5)$, it is the direct
sum of the free two-step nilpotent algebra on three generators and a two-dimensional abelian ideal.

\begin{table}[htbp]
\renewcommand{\arraystretch}{1.2}
\begin{tabular}{|p{.6cm}|p{4.8cm}|p{9.11cm}|}
  \hline
  $\#$ & \centering Relations & \centering Orthonormal basis \tabularnewline
  \hline
  $\mathbf{26}^*$ & $-251,341,-132,232,123,144$, $355,456$ &
  $-\frac{\sqrt3}{\sqrt{91}} (26 X_1+ 19 X_2),-\frac{3\sqrt{209}}{\sqrt{91}} X_2,\frac{12\sqrt{19}}{7} X_3,
  \frac{11}{6} X_4, \frac{\sqrt{22}}{\sqrt3} X_5$, \newline
  $Z_1, \frac{6\sqrt6}{\sqrt{77}} Z_2, Z_3, \frac{\sqrt{627}}{16\sqrt{14}} Z_4, Z_5, \frac{77}{72\sqrt{19}} Z_6$ \\
  $\mathbf{28}^*$ & $121,-153-154,232,-142,243$, $344,355$, $456$ &
  $\frac{8\sqrt{266}}{\sqrt{5}} X_1, \frac{12\sqrt{57}}{\sqrt{145}} X_2, \frac{1}{\sqrt{29}} (9 X_2 + 29 X_3), X_4,
  \frac{\sqrt{3}}{2\sqrt{14}} X_5, \frac{16\sqrt{14}}{5} Z_1$, \newline
  $\frac{4\sqrt{2}}{\sqrt{5}} Z_2,
  \frac{4\sqrt{6}}{\sqrt{145}} Z_3, \frac{1}{19\sqrt{29}} (-9 Z_3 + 29 Z_4),
  \frac{\sqrt{3}}{10\sqrt{7}} Z_5, \frac{\sqrt{3}}{16\sqrt{266}} Z_6$ \\
  $\mathbf{44}^*$ & $131,-152-153,232,243,254$, $345,456$ &
  $\frac{2\sqrt{199}}{3\sqrt{145}}X_1,\frac{1}{\sqrt{29}} X_2, \frac{\sqrt{2}}{3\sqrt{2805}}(X_3+\frac{33}{58}X_4),
  \frac{1}{29}X_4, \frac{1}{\sqrt{5771}} X_5,Z_6$, \newline
  $\frac{398\sqrt{2}}{3\sqrt{33}} Z_1, \frac{10\sqrt{34}}{\sqrt{33}} Z_2, \frac{-\sqrt{65}}{78}(154 Z_2+199 Z_3),
  \frac{5\sqrt{17}}{\sqrt{23}} Z_4, \frac{2\sqrt{995}}{\sqrt{1353}} Z_5$ \\
  $\mathbf{45}^*$ & $-141,231,-122,242,133,254$, $355,456$ &
  $13X_1-8X_4,\frac25 X_2,260 X_3, 12 X_4,X_5, \sqrt{130}Z_1, \frac{1}{\sqrt{10}} Z_2$, \newline
  $65 \sqrt{10} Z_3, \frac{1}{10 \sqrt{13}}Z_4, 5\sqrt{13}Z_5, Z_6$ \\
  $\mathbf{55}^*$ & $-131,141,152,233,254,345$, $456$ &
  $X_1,X_2, \frac{1}{2\sqrt6}(17X_3+12X_4),\sqrt{11}X_4, X_5, \frac{\sqrt{34}}{2\sqrt3} Z_1,
  \frac{\sqrt{5}}{\sqrt7} Z_2$, \newline
  $\frac{\sqrt{34}}{2\sqrt3} Z_3, \frac{\sqrt{5}}{\sqrt7} Z_4, \frac{17\sqrt{55}}{2\sqrt{39}} Z_5, \sqrt{17} Z_6$ \\
  $\mathbf{60}^*$ & $-251,341,-132,232,153,244$, $355,456$ &
  $\sqrt{19}(2 X_1+X_2),3\sqrt3 X_2,4X_3, \sqrt{33}X_4,\sqrt{22}X_5,
  Z_1, \frac{4}{\sqrt{33}} Z_2$, \newline
  $\frac{\sqrt{19}}{\sqrt{51}}(-Z_1+2Z_3), Z_4, \frac{4}{\sqrt{33}} Z_5, Z_6$ \\
  $\mathbf{66}^*$ & \hskip-.1cm $-131,231,452,143,254,345,356$ &
  $ \frac{1}{\sqrt{2}} (2X_1+X_2), \frac{\sqrt3}{\sqrt2} X_2,X_3,X_4,X_5,Z_1,Z_2,Z_3,Z_4,Z_5,Z_6$ \\
  $\mathbf{72}^*$ & $-131,231,-252,342,123,154$, $245,356,457$ &
  $4X_1+2X_2, \frac{\sqrt{19}}{\sqrt2}X_2, \frac13 X_3, \sqrt{11}X_4,\frac{1}{\sqrt{57}} X_5$,
  $Z_1, Z_2, \frac{12\sqrt{19}}{\sqrt{29}} Z_3$, \newline
  $\frac{2\sqrt{6}}{5\sqrt{19}} (Z_2-2Z_4),
  3\sqrt{11} Z_5, \frac{\sqrt{2}}{\sqrt{1311}} Z_6, \frac{\sqrt{66}}{\sqrt{437}} Z_7$ \\
  $\mathbf{78}^*$ & $-131,231,122,143,254,345,$ $356,457$ &
  $-2\sqrt{3} (X_1+X_2), 2(X_1-X_2), \sqrt{14}X_3,\sqrt{51}X_4,\sqrt{51}X_5$, \newline
  $\frac{\sqrt{14}}{\sqrt{51}} Z_1, \frac{4\sqrt{2}}{3\sqrt{17}} Z_2,
  Z_3, Z_4, Z_5, Z_6, \frac{\sqrt{51}}{\sqrt{14}} Z_7$
    \\
  \hline
\end{tabular}
\vskip .25cm
\caption{The algebras dual to the algebras from Table~\ref{table:nobasis}}\label{table:nobasisdual}
\end{table}

In the remaining cases, when there is no nice basis, we follow steps (\ref{it:a}) and (\ref{it:c})
at the end of Section~\ref{s:thpE},
first finding a pre-Einstein derivation and then solving the system of equations for the nilsoliton metric.
All nine algebras
$\mathbf{26}, \mathbf{28}, \mathbf{44}, \mathbf{45}, \mathbf{55}, \mathbf{60},$ $\mathbf{66}, \mathbf{72}, \mathbf{78}$
and their duals appear to be Einstein nilradicals. An explicit form of the nilsoliton inner product is given
in Tables~\ref{table:nobasis} and \ref{table:nobasisdual}, where
we use the following notation. The first column is the
number of the algebra in the list \cite[Table 2]{GT}, the asterisk means the dual algebra. The column ``Relations"
lists the nonzero brackets with respect to the basis $\{X_i, Z_\al\}$. For instance,
algebra $\mathbf{28}^*$ is defined by the relations
$[X_1,X_2]=Z_1$, $[X_1,X_5]=-Z_3-Z_4, \, [X_2,X_3]=Z_2$, $[X_1,X_4]=-Z_2, \, [X_2,X_4]=Z_3, \,
[X_3,X_4]=Z_4, \, [X_3,X_5]=Z_5$, and $[X_4,X_5]=Z_6$. The third column gives an orthonormal basis for a nilsoliton
inner product (which can be checked directly by \eqref{eq:riccinilexplicit} and \eqref{eq:ricn}).
Note that the algebras from Tables~\ref{table:nobasis} and~\ref{table:nobasisdual}
exhibit quite exotic eigenvalue types.

Summarizing these results we obtain the following proposition.

\begin{proposition}\label{p:lowdim}
A two-step nilpotent Lie algebra of type $(p,q)$, with $q \le 5$ and $(p,q) \ne (5,5)$, is an Einstein nilradical,
unless it is isomorphic (over $\bc$) to an $11$-dimensional algebra of type $(6,5)$ dual to one of the algebras
$\mathbf{21},\mathbf{36},\mathbf{41},\mathbf{50},\mathbf{52},\mathbf{59}$, or to a $12$-dimensional algebra of
type $(7,5)$ dual to one of the algebras $\mathbf{75},\mathbf{87}$, or $\mathbf{102}$ from \emph{\cite[Table 2]{GT}}.
\end{proposition}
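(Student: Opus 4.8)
The plan is to reduce the proposition to a finite case analysis and then dispatch each case either by Theorem~\ref{t:nice} or, where that tool does not apply, by exhibiting a nilsoliton metric explicitly.

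First I would cut the range of types down to a finite list. By \cite[Theorem 3.1]{Wil} and \cite[Theorem 5.1]{La2} every nilpotent Lie algebra of dimension at most six is an Einstein nilradical, so we may assume $p+q\ge 7$; and among two-step algebras the types $p=1$ (a Heisenberg algebra plus an abelian ideal, hence an Einstein nilradical), $p=D:=\tfrac12 q(q-1)$ (free two-step, \cite[Proposition 2.9]{GK}), and $p=D-1$ (\cite[Lemma 6]{Ni1}) already consist of Einstein nilradicals. For $q\le 5$ and $(p,q)\ne(5,5)$ this leaves precisely the pairs $(p,q)\in\{(3,4),(4,4),(2,5),(3,5),(4,5),(6,5),(7,5),(8,5)\}$, the last three being the types of the duals of the algebras of types $(4,5),(3,5),(2,5)$. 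By \cite[Table 2]{GT} each of these types contains only finitely many isomorphism classes, every one of which is defined over $\br$ (and so is its dual), so by Theorem~\ref{t:rvsc} it suffices to test a single real representative of each class.

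Next I would apply Theorem~\ref{t:nice} to the algebras that admit a nice basis. All algebras of types $(3,4),(4,4),(2,5)$, all but two of type $(3,5)$, and all but seven of type $(4,5)$ carry the nice basis displayed in \cite[Table 2]{GT}; since $q\le 5$, the dual of an algebra with a nice basis again has a nice basis (a direct check from Definition~\ref{d:nice}), which covers the relevant algebras of types $(6,5),(7,5),(8,5)$ as well. For each such algebra I would form the matrix $Y$ of Theorem~\ref{t:nice}; a routine computation shows that here $Y$ always has maximal rank $m$, so $YY^t\al=[1]_m$ has a unique solution $\al$, and the algebra is an Einstein nilradical exactly when all coordinates of $\al$ are positive. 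Evaluating this sign condition singles out precisely the exceptions in the statement --- in type $(7,5)$ the duals of $\mathbf{75},\mathbf{87},\mathbf{102}$ and in type $(6,5)$ the duals of $\mathbf{21},\mathbf{36},\mathbf{41},\mathbf{50},\mathbf{52},\mathbf{59}$ --- while every other nice-basis algebra (and its dual) has $\al>0$.

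Finally I would treat the nine algebras $\mathbf{26},\mathbf{28},\mathbf{44},\mathbf{45},\mathbf{55},\mathbf{60},\mathbf{66},\mathbf{72},\mathbf{78}$ of types $(3,5)$ and $(4,5)$, together with their duals, for which no nice basis is available, following steps (\ref{it:a}) and (\ref{it:c}) at the end of Section~\ref{s:thpE}: first compute a pre-Einstein derivation $\phi$ by solving the linear system \eqref{eq:pEtrace} on a maximal torus of $\Der(\n)$, then look for an inner product satisfying $\ric_\n=c(\id-\phi)$ with $c<0$, which by step (\ref{it:c}) is exactly the condition for that inner product to be nilsoliton. In all eighteen cases such an inner product exists; orthonormal bases realizing these nilsoliton metrics are recorded in Tables~\ref{table:nobasis} and~\ref{table:nobasisdual} and can be verified directly via \eqref{eq:riccinilexplicit} and \eqref{eq:ricn}. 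Combining the three steps produces exactly the list of exceptions claimed. I expect the last step to be the main obstacle: Theorem~\ref{t:nice} is useless without a nice basis, the eigenvalue types occurring here are irregular enough that no uniform argument presents itself, and so one is forced to solve the curvature equations case by case --- the content of the two tables.
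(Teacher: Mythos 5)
Your proposal is correct and follows essentially the same route as the paper: the same reduction to the finite list of types $(3,4),(4,4),(2,5),(3,5),(4,5),(6,5),(7,5),(8,5)$ via the low-dimensional and $p=1,D,D-1$ results, the same appeal to Theorem~\ref{t:rvsc} to work with the real representatives from \cite[Table 2]{GT}, the same application of Theorem~\ref{t:nice} (with $\rk Y=m$ and the positivity test on the unique $\al$) to the nice-basis algebras and their duals, and the same explicit nilsoliton computation for the nine algebras without a nice basis and their duals, as recorded in Tables~\ref{table:nobasis} and~\ref{table:nobasisdual}.
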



\bibliographystyle{amsalpha}

\end{document}